\DeclareMathOperator{\gp}{gp}
\DeclareMathOperator{\diam}{diam}
\newtheorem{theorem}{Theorem}[section]
\newtheorem{lemma}[theorem]{Lemma}
\newtheorem{corollary}[theorem]{Corollary}
\newtheorem{proposition}[theorem]{Proposition}
\newtheorem{conjecture}[theorem]{Conjecture}
\newtheorem{problem}[theorem]{Problem}
\theoremstyle{definition}
\newtheorem{definition}[theorem]{Definition}
\newcommand {\gpg} {{\rm gp}_{\rm g}}
\tikzset{middlearrow/.style={
		decoration={markings,
			mark= at position 0.75 with {\arrow[scale=2]{#1}} ,
		},
		postaction={decorate}
	}
}
\tikzset{midarrow/.style={
		decoration={markings,
			mark= at position 0.75 with {\arrow[scale=2]{#1}} ,
		},
		postaction={decorate}
	}
}
\begin{document}
	
	\title{Builder-Blocker General Position Games}
	\author{Sandi Klav\v{z}ar $^{a,b,c}$ \\ \texttt{\footnotesize sandi.klavzar@fmf.uni-lj.si} \and Jing Tian $^{d}$ \\ \texttt{\footnotesize jingtian526@126.com}  \and
		James Tuite $^{e}$ \\ \texttt{\footnotesize james.t.tuite@open.ac.uk}}
	
	\maketitle

	\noindent
	$^{a}$  Faculty of Mathematics and Physics, University of Ljubljana, Slovenia\\
	$^{b}$  Institute of Mathematics, Physics and Mechanics, Ljubljana, Slovenia \\
	$^{c}$  Faculty of Natural Sciences and Mathematics, University of Maribor, Slovenia\\
	$^{d}$  School of Science, Zhejiang University of Science and Technology, Hangzhou, Zhejiang 310023, PR China\\
	$^e$ School of Mathematics and Statistics, Open University, Milton Keynes, UK
	
	\begin{abstract}
		This paper considers a game version of the general position problem in which a general position set is built through adversarial play. Two players in a graph, Builder and Blocker, take it in turns to add a vertex to a set, such that the vertices of this set are always in general position. The goal of Builder is to create a large general position set, whilst the aim of Blocker is to frustrate Builder's plans by making the set as small as possible. The game finishes when no further vertices can be added without creating three-in-a-line and the number of vertices in this set is the game general position number. We determine this number for some common graph classes and provide sharp bounds, in particular for the case of trees. We also discuss the effect of changing the order of the players.  
	\end{abstract}
	
	\noindent
	{\bf Keywords:}
	general position set, games on graphs, trees, no-three-in-line, universal line.
	
	\medskip\noindent
	{\bf AMS Subj.\ Class.\ (2020)}: 05C12, 05C57, 05C69
	
	\section{Introduction}
	
	Given a special property of subsets of a graph, it is often of practical and theoretical interest to ask for optimal sets with the desired property that are produced as a result of adversarial play. For example, Martin Gardner suggested such an approach to the well known chromatic number in~\cite{Gardner} and this problem now has an extensive literature (see~\cite{BarGryKierZhu} for a survey). A positional game in which the first player tries to build a large clique and the second player aims to frustrate the efforts of the first player was studied in~\cite{ErdSelf}. The domination game, in which one player, called Dominator, aims to build a small dominating set, whilst the second player, Staller, tries to keep the set as large as possible, was defined in~\cite{bresar-2010} (see also the book~\cite{book-2021}). Inspired by the latter game, in this paper we study a game version of the general position problem. 
	
	The general position problem originated in the no-three-in-line puzzle of Dudeney~\cite{dudeney-1917} and was generalised to the setting of graph theory in~\cite{ullas-2016,Manuel-2018}. A subset $S$ of the vertex set of a graph $G$ is in \emph{general position} if no shortest path of $G$ passes through more than two vertices of $S$. The \emph{general position number} $\gp (G)$ of $G$ is the number of vertices in a largest general position set. In the short period since this invariant was introduced, it has already been very well researched, see for example~\cite{AnaChaChaKlaTho, irsic-2024, KlaKriTuiYer, KorzeVesel, Patkos-2019, thomas-2024, tian-2021, yao-2022}. Additional research has also been carried out on edge general position sets~\cite{klavzar-2023, manuel-2022}. Building on another question of Gardner~\cite{Gardnerlower}, the recent paper~\cite{StefKlaKriTui} dealt with the smallest maximal general position sets of a graph, which represent the worst-case output of a greedy search for general position sets; the number of vertices in such a set is the \emph{lower general position number} $\gp^-(G)$. 
	
	The following general position game is introduced in~\cite{klavzar-2022}. Two players A and B take it in turns to select free vertices of a graph $G$ such that at any time the set of selected vertices is in general position; the last player that can move wins (the scenario in which the last player to move loses is considered in~\cite{ullas-2023+}). At the end of this game the resulting general position set will be maximal. This suggests that positional games may be a fruitful method of constructing maximal general position sets with order intermediate between $\gp ^{-}(G)$ and $\gp (G)$. For example, if $r_1 \geq\dots \geq r_t$, then $\gp (K_{r_1,\dots,r_t}) = \max \{ r_1,t\}$, $\gp ^- (K_{r_1,\dots ,r_t}) = \min \{ r_t,t\} $ and if $t$ is odd and there is a part in the partition with odd order, then the optimal play described in~\cite{klavzar-2022} produces a maximal general position set of order $t$. However, as the players of this game are not concerned with the length of their game, just in who moves last, optimal game play can produce maximal general position sets of different orders. In order to give a well-defined invariant associated with these adversarial games, we define the following game.
	
	\begin{definition}[Builder-Blocker general position game]
		Builder and Blocker take it in turns to choose an unmarked vertex of a graph $G$. If Builder moves first we speak of a {\em Builder general position game}, otherwise of a {\em Blocker general position game}. For brevity we will simply call the first game a {\em B-game} and the second a {\em B'-game}. At each stage the set of marked vertices must be in general position. The game ends when no further vertices can be selected. The goal of Builder is to produce a largest possible general position set, whilst the aim of Blocker is to frustrate Builder by forcing them to build a general position set containing as few vertices as possible.
	\end{definition}
	
	\begin{definition}
		The number of vertices in the general position set built by optimum play in a B-game on a graph $G$ is the \emph{Builder-game general position number} (\emph{B-game general position number}) of $G$, which we will denote by $\gpg(G)$. The corresponding invariant for the B'-game is the \emph{Blocker-game general position number} (\emph{B'-game general position number}) of $G$, which we will denote by $\gpg'(G)$.
	\end{definition}
	
	The order of a graph $G$ will be denoted by $n(G)$. For a positive integer $k$, the set $\{1,\ldots, k\}$ will be written $[k]$. The distance $d_G(u,v)$ between vertices $u$ and $v$ of a graph $G$ is the length of a shortest $u,v$-path in $G$. The largest value of $d_G(u,v)$ over all pairs $u,v \in V(G)$ is the \emph{diameter} $\diam (G)$ of $G$.
	For $0 \leq t \leq \diam (G)$, the set of vertices at distance exactly $t$ from $u \in V(G)$ is $N^t(u)$; in particular, the \emph{neighbourhood} $N(u) = N^1(u)$ of $u \in V(G)$ is the set $\{ v \in V(G): u \sim v\} $. The \emph{degree} of $u \in V(G)$ is $\deg_G(u) = |N(u)|$ and the largest degree in the graph is the \emph{maximum degree} $\Delta (G)$. A vertex $u$ is a \emph{leaf} of $G$ if $\deg_G(u) = 1$; the number of leaves of $G$ will be written $\ell (G)$.  
	
	The rest of the paper is structured as follows. In Section~\ref{sec:bounds} we give some useful bounds on the game general position numbers and determine these numbers for several classes of graphs, including the Kneser graphs $K(n,2)$. In Section~\ref{sec:number=2} we explore graphs with small game general position numbers, showing the connection with universal lines and characterising graphs $G$ with $\gpg'(G) = 2$. The main message of Section~\ref{Sec:player order} is that the order of the players in the general position game is important, since both $\gpg '(G)-\gpg(G)$ and $\gpg (G)-\gpg '(G)$ can be arbitrarily large. Finally, in Section~\ref{sec:trees}, we derive a sharp upper bound for trees in the B'-game and characterise the equality case.  
	
	\section{Bounds and exact values}
	\label{sec:bounds}
	In this section, we provide some bounds on the game general position numbers that will prove useful in the remainder of the paper. We then discuss the results of the B-game and the B'-game on some common graph classes, including complete multipartite graphs and Kneser graphs.
	
	As both games, the B-game and the B'-game, result in a maximal general position set, we have the trivial bounds 
	\begin{align}
		\gp ^-(G) & \leq \gpg(G) \leq \gp (G), \label{eq:for-B-game}  \\     
		\gp ^-(G) & \leq \gpg'(G) \leq \gp (G). \label{eq:for-B'-game} 
	\end{align}

	The following lemma will be very useful in our investigations of the Builder-Blocker general position game. 
	
	\begin{lemma}
		\label{lem:useful}
		Let $G$ be a graph and let $S$ be the set of vertices played in a B-game or B'-game so far. Let $S'$ be the set of vertices that are playable as the next move of the game. If $S\cup S'$ is a general position set, then the game will finish precisely after all the vertices from $S'$ have been played.
	\end{lemma}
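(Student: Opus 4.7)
The plan is to show that once the hypothesis $S \cup S'$ is in general position is satisfied, the rest of the game is completely ``frozen'' in the sense that the playable set at every subsequent stage is exactly the remainder of $S'$, independent of the choices the players make. From this both conclusions follow: no vertex outside $S'$ can ever be played, and no vertex of $S'$ can become unplayable, so play must continue until $S'$ is exhausted, at which point no further moves are possible.

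The key observation is that being in general position is a hereditary property: any subset of a general position set is in general position. First I would record that if $S \cup S'$ is in general position then so is every subset of it. Then I would argue by induction on the number of moves played after the current position. Suppose that after some additional moves $v_1,\ldots,v_k$ chosen from $S'$, the marked set is $S_k = S \cup \{v_1,\ldots,v_k\}$. A vertex $w$ is playable at this stage precisely when $S_k \cup \{w\}$ is in general position.

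If $w \in S' \setminus \{v_1,\ldots,v_k\}$, then $S_k \cup \{w\} \subseteq S \cup S'$, which is in general position by hypothesis, so $w$ is playable. Conversely, if $w \notin S' \cup S_k$, then by definition of $S'$ the set $S \cup \{w\}$ already fails to be in general position, i.e.\ some shortest path in $G$ passes through at least three vertices of $S \cup \{w\}$; since $S \subseteq S_k$, the same shortest path witnesses that $S_k \cup \{w\}$ is not in general position either, so $w$ is not playable. Hence the playable vertices at this stage are precisely $S' \setminus \{v_1,\ldots,v_k\}$, and in particular every move in the continuation lies in $S'$ and every unplayed vertex of $S'$ remains playable.

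This is essentially the whole argument; there is no real obstacle, since the hereditary nature of general position does all the work. The only thing one has to be slightly careful about is that the conclusion holds regardless of which of Builder or Blocker makes any given move, but this is automatic from the preceding paragraph because the characterisation of the playable set at each stage depends only on the current marked set $S_k$, not on whose turn it is.
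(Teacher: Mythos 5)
Your proof is correct and follows essentially the same route as the paper's: vertices outside $S\cup S'$ are permanently excluded because the shortest-path witness against them only gains strength as the marked set grows, while every vertex of $S'$ stays playable because subsets of the general position set $S\cup S'$ are in general position. The only difference is that you make the induction and the hereditary property explicit where the paper treats them as obvious.
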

	
	\begin{proof}
		If $u$ is a vertex of $V(G)\setminus (S\cup S')$, then $u$ cannot be chosen by either Builder or Blocker in the current state of play; hence $S \cup \{ u\} $ is not a general position set, i.e., there is a shortest path in $G$ containing $u$ and two vertices of $S$. This will obviously remain true at any future stage of the game, so that the vertex $u$ cannot be played at any point in the game once the vertices of $S$ have been chosen. On the other hand, since $S\cup S'$ is a general position set, playing the vertices from $S'$ one by one is legal until all the vertices from $S'$ have been played. 
	\end{proof}
	
	If $G$ is a graph and $uv\in E(G)$, then let 
	\begin{align*}
		W_{uv} & = \{w\in V(G):\ d_G(u,w) < d_G(v,w)\}\,, \\
		W_{vu} & = \{w\in V(G):\ d_G(v,w) < d_G(u,w)\}\,, \\
		_uW_v & = \{w\in V(G):\ d_G(u,w) = d_G(v,w)\}\,. 
	\end{align*}
	Note that $u\in W_{uv}$, $v\in W_{vu}$, and that $V(G)$ is the disjoint union of $W_{uv}$, $W_{vu}$ and $_uW_v$. For an example of these sets consider the Petersen graph $P$ in  Fig.~\ref{fig:Petersen}. 
	
	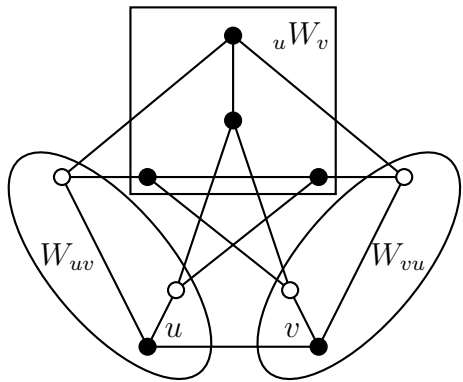
\begin{figure}[ht!]
		\begin{center}
			\begin{tikzpicture}[scale=1.5,style=thick]
				\tikzstyle{every node}=[draw=none,fill=none]
				\def\vr{2pt} 
				
				\begin{scope}[yshift = 0cm, xshift = 0cm]
					
					\path (0.25,0) coordinate (x1);
					\path (1.75,0) coordinate (x2);
					\path (2.5,1.5) coordinate (x3);
					\path (1,2.75) coordinate (x4);
					\path (-0.5,1.5) coordinate (x5);
					\path (0.5,0.5) coordinate (x6);
					\path (1.5,0.5) coordinate (x7);
					\path (1.75,1.5) coordinate (x8);
					\path (1,2) coordinate (x9);
					\path (0.25,1.5) coordinate (x10);
					
					\draw (x1)--(x2)--(x3) -- (x4)--(x5)--(x1);
					\draw (x6)--(x8)--(x10)--(x7)--(x9)--(x6);
					\draw (x1)--(x6);
					\draw (x2)--(x7);
					\draw (x3)--(x8);
					\draw (x4)--(x9);
					\draw (x5)--(x10);
					\draw[rotate=40] (0.4,0.6) ellipse (15pt and 35pt);
					\draw[rotate=-40] (1.15,1.9) ellipse (15pt and 35pt);
					\draw[draw=black] (0.1,1.35) rectangle (1.9,3.0);
					\draw (x1)  [fill=black] circle (\vr);
					\draw (x2)  [fill=black] circle (\vr);
					\draw (x3)  [fill=white] circle (\vr);
					\draw (x4)  [fill=black] circle (\vr);
					\draw (x5)  [fill=white] circle (\vr);
					\draw (x6)  [fill=white] circle (\vr);
					\draw (x7)  [fill=white] circle (\vr);
					\draw (x8)  [fill=black] circle (\vr);
					\draw (x9)  [fill=black] circle (\vr);
					\draw (x10)  [fill=black] circle (\vr);

					\draw (x1)++(0.23,0.15) node {$u$};
					\draw (x2)++(-0.23,0.15) node {$v$};
					\draw (x1)++(-0.7,0.8) node {$W_{uv}$};
					\draw (x2)++(0.7,0.8) node {$W_{vu}$};
					\draw (x4)++(0.6,0.0) node {$_uW_v$};
					
				\end{scope}
			\end{tikzpicture}
		\end{center}
		\caption{Sets $W_{uv}$, $W_{vu}$, and $_uW_v$ in the Petersen graph}
		\label{fig:Petersen}
	\end{figure}
	
	\begin{theorem}
		\label{thm:bound-with-uWv}
		If $G$ is a graph with $n(G)\ge 2$, then 
		$$2\le \gpg(G) \le 2 + \max_{u\in V(G)} \min_{v\in N(u)} |_uW_v|\,.$$
	\end{theorem}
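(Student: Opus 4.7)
The plan is to establish each bound via an explicit strategy. For the lower bound, once Builder has opened by playing a vertex $u$, every two-element subset of $V(G)$ is automatically in general position, so the set of playable vertices for Blocker is $V(G)\setminus\{u\}$, which is non-empty because $n(G)\ge 2$. Blocker is therefore forced to play, so the game produces at least two vertices.

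For the upper bound, I would describe Blocker's strategy explicitly. After Builder plays $u\in V(G)$, Blocker chooses a neighbour $v\in N(u)$ that minimises $|{}_uW_v|$. The central claim is then the following characterisation: given $uv\in E(G)$ and $w\in V(G)\setminus\{u,v\}$, the set $\{u,v,w\}$ is in general position if and only if $w\in{}_uW_v$.

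The heart of the argument is this characterisation, and it relies crucially on $u$ and $v$ being adjacent. The triangle inequality then forces $|d_G(u,w)-d_G(v,w)|\le 1$. If $w\in W_{uv}$, the resulting equality $d_G(v,w)=d_G(u,w)+1=d_G(v,u)+d_G(u,w)$ shows that $u$ lies on a shortest $v,w$-path, contradicting general position; the case $w\in W_{vu}$ is symmetric. Conversely, when $w\in{}_uW_v$, the relation $d_G(u,w)=d_G(v,w)$ combined with $d_G(u,v)=1$ rules out either of $u$ or $v$ lying on a geodesic between the other and $w$, and $w$ itself cannot lie on the single-edge shortest $u,v$-path, so $\{u,v,w\}$ is in general position.

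From the claim, every move after the initial two must belong to ${}_uW_v$, so at most $|{}_uW_v|$ further vertices can be played during the game. Consequently, regardless of Builder's first choice of $u$, the game produces at most $2+|{}_uW_v|$ vertices, where $v$ is Blocker's response; maximising over $u\in V(G)$ yields the stated upper bound. The main technical obstacle I anticipate is stating the characterisation cleanly and verifying both directions without excess case analysis; a minor issue is that $\min_{v\in N(u)}|{}_uW_v|$ presupposes $N(u)\ne\emptyset$, but this can be handled by restricting Builder's choice to non-isolated vertices or by noting that the bound is vacuous otherwise.
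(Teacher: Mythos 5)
Your proof is correct and follows essentially the same route as the paper: Blocker answers Builder's first vertex $u$ with a neighbour $v$ minimising $|{}_uW_v|$, and the observation that any $w\in W_{uv}\cup W_{vu}$ lies on a common geodesic with $u$ and $v$ (since $uv\in E(G)$ forces $|d_G(u,w)-d_G(v,w)|\le 1$) confines all later moves to ${}_uW_v$. The only difference is cosmetic: you prove a full ``if and only if'' characterisation of when $\{u,v,w\}$ is in general position, whereas the paper (and the bound) only needs the forward direction.
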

	
	\begin{proof}
		The lower bound is clear. Consider now the B-game played on $G$ and assume that Builder selected a vertex $u$ as their first move. Suppose that Blocker replies by playing a neighbour $v$ of $u$. If $w\in W_{uv}$, then $d_G(w,v) = d_G(w,u)+1$, and as $uv\in E(G)$, we infer that $w$, $u$, and $v$ lie on a common shortest path. It follows that in the rest of the game no vertex from $W_{uv}$ will be selected. By the same argument, no vertex from $W_{vu}$ will be selected in the rest of the game; hence at most $2 + |_uW_v|$ vertices will be selected. Thus if Builder starts by playing $u$, then Blocker has a strategy that limits the resulting general position set to at most $2 + \min_{v\in N(u)} |_uW_v|$ vertices. As Builder is the first to play and wishes to maximise the number of vertices selected, the upper bound follows. 
	\end{proof}
	
	Theorem~\ref{thm:bound-with-uWv} instantly implies the following useful corollary.
	
	\begin{corollary}
		\label{cor:bound-with-uWv}
		If $G$ is a graph with $n(G)\ge 2$, then 
		$$2\le \gpg(G) \le 2 + \max\{|_uW_v|:\ uv\in E(G)\}\,.$$    
	\end{corollary}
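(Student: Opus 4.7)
The plan is to observe that this corollary is simply a weakening of Theorem~\ref{thm:bound-with-uWv}, in which the inner minimisation over neighbours of $u$ is replaced by a maximisation. Since the lower bound $2 \le \gpg(G)$ carries over directly from that theorem, I would only need to address the upper bound.

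The main step I would carry out is the standard max–min inequality, applied here to the function $f(u,v) = |_uW_v|$ indexed by edges $uv \in E(G)$. Explicitly, for each fixed vertex $u$ the minimum of $|_uW_v|$ over $v \in N(u)$ is at most any individual value $|_uW_{v'}|$ with $v' \in N(u)$, and hence at most their maximum. Taking the maximum over $u$ on both sides yields
\[ \max_{u \in V(G)} \min_{v \in N(u)} |_uW_v| \;\le\; \max\{|_uW_v| : uv \in E(G)\}. \]
Adding $2$ to both sides and chaining with the upper bound of Theorem~\ref{thm:bound-with-uWv} then produces the desired estimate on $\gpg(G)$.

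There is essentially no obstacle to this argument — as the authors remark, the corollary is immediate. The only minor bookkeeping point worth flagging is that $|_uW_v|$ depends only on the unordered edge $uv$ (since $_uW_v = {}_vW_u$ by the symmetry of graph distance), so indexing the outer maximum by $uv \in E(G)$ is unambiguous and coincides with ranging over all ordered pairs of adjacent vertices.
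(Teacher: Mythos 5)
Your proposal is correct and matches the paper exactly: the authors give no separate proof, simply noting that the corollary follows instantly from Theorem~\ref{thm:bound-with-uWv}, which is precisely the max--min inequality step you spell out. Your remark that $_uW_v$ is symmetric in $u$ and $v$ is also accurate and harmless.
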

	
	The upper bound of Corollary~\ref{cor:bound-with-uWv} (and thus also of Theorem~\ref{thm:bound-with-uWv}) is sharp, as demonstrated by complete graphs and the Petersen graph $P$. If $uv$ is an edge of $K_n$, then we have $|_uW_v| = n-2$, so that the bound yields $\gpg(K_n)\le n$. As for the Petersen graph, Theorem~\ref{thm:bound-with-uWv} yields $\gpg(P)\le 6$, see Fig.~\ref{fig:Petersen} again. On the other hand, if Builder first plays some vertex $u$, then either Blocker replies with a neighbour $v$ of $u$, or Blocker chooses a vertex $v$ with $d_G(u,v) = 2$, which case Builder can choose a vertex of $N(u)\setminus N(v)$ on their next turn; in either case, Lemma~\ref{lem:useful} shows that the resulting set contains six vertices and hence $\gpg(P) = 6$. 
	
	If $G$ is a connected, bipartite graph, then $_uW_v = \emptyset$ for every edge $uv$ of $G$, which implies the following: 
	
	\begin{corollary}
		\label{cor:bipartite}
		If $G$ is a connected, bipartite graph with $n(G)\ge 2$, then $\gpg(G)=2$. 
	\end{corollary}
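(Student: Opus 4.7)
The plan is to derive the result as an immediate application of Corollary~\ref{cor:bound-with-uWv}, so the substantive work reduces to verifying the parity claim that $_uW_v = \emptyset$ for every edge $uv$ of a connected bipartite graph.

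First I would fix a bipartition $(A,B)$ of $V(G)$ and an arbitrary edge $uv \in E(G)$, so that without loss of generality $u \in A$ and $v \in B$. For any vertex $w \in V(G)$, every $w,u$-walk has length of the same parity (determined by whether $w$ is in $A$ or in $B$, together with $u \in A$), and likewise every $w,v$-walk has the opposite parity. In particular, the shortest $w,u$-path and the shortest $w,v$-path differ in parity, hence $d_G(w,u) \neq d_G(w,v)$. This shows $w \notin {}_uW_v$, and since $w$ was arbitrary, $_uW_v = \emptyset$.

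Next I would plug this into Corollary~\ref{cor:bound-with-uWv}: the maximum of $|_uW_v|$ over the edges of $G$ is $0$, so $\gpg(G) \leq 2$. For the matching lower bound I simply note that any two vertices trivially form a general position set, and since $G$ is connected with $n(G) \geq 2$, Builder's first move can always be legally followed by at least one move of Blocker; thus $\gpg(G) \geq 2$. Combining the two inequalities gives $\gpg(G) = 2$.

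There is no real obstacle here: the only step requiring any thought is the parity observation, which is a classical feature of bipartite graphs, and the rest is just bookkeeping against the two bounds already established.
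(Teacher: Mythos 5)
Your proposal is correct and follows the same route as the paper: the paper also obtains this corollary directly from the bound of Corollary~\ref{cor:bound-with-uWv} by observing that bipartiteness forces $_uW_v=\emptyset$ for every edge $uv$. You simply spell out the parity argument that the paper leaves implicit, which is fine.
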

	
	As we will see in Theorem~\ref{the:buildblock multipartite}, $\gpg'(G)$ can be arbitrary large even in the class of bipartite graphs; hence Theorem~\ref{thm:bound-with-uWv} does not extend to the B'-game. However, we can still obtain a useful bound for the following family of graphs: let ${\cal G}$ be the set of graphs $G$ such that for any vertices $x, y\in V(G)$ with $d_G(x,y)\ge 2$, there exists an edge $yz$, such that $x$ is equidistant from $y$ and $z$. This family ${\cal G}$ is quite large. For instance, it contains $C_5$, the Petersen graph, complete graphs, many circulants, and wheels $W_k$, $k\ge 5$. 
	
	\begin{theorem}
		\label{thm:bound-with-uWv-for-B'-game}
		If $G\in {\cal G}$, then 
		$$\gpg'(G) \le 2 + \max\{|_xW_y|:\ xy\in E(G)\}\,.$$
	\end{theorem}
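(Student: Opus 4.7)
The plan is to exhibit a strategy for Blocker (the first mover in a B'-game) that forces the final general position set to have at most $2 + \max\{|_xW_y|: xy\in E(G)\}$ vertices. Blocker opens by playing an arbitrary vertex $x$, and then responds to Builder's first reply $y$ by cases depending on $d_G(x,y)$.

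If $xy\in E(G)$, I would argue exactly as in the proof of Theorem~\ref{thm:bound-with-uWv}: for any $w\in W_{xy}$ one has $d_G(w,y)=d_G(w,x)+1$, so $w$, $x$, $y$ lie on a common shortest path and $w$ can never be added; symmetrically for $W_{yx}$. Hence every later move must lie in $_xW_y$, and the final set has at most $2 + |_xW_y|\le 2 + \max\{|_uW_v|: uv\in E(G)\}$ vertices; what Blocker plays afterwards is immaterial to this bound.

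If $d_G(x,y)\ge 2$, Blocker invokes the defining property of $\mathcal{G}$ to choose a neighbour $z$ of $y$ with $d_G(x,z)=d_G(x,y)$, and plays $z$. The key verification is that $\{x,y,z\}$ remains in general position: a shortest path through all three with $x$ in the middle would give $d_G(y,z)=d_G(y,x)+d_G(x,z)\ge 4$, contradicting $yz\in E(G)$; with $y$ in the middle it would give $d_G(x,z)=d_G(x,y)+1$, contradicting the choice of $z$; and the case with $z$ in the middle is symmetric. Once the edge $yz$ lies in the marked set, the argument of the first case shows that every subsequent move lies in $_yW_z$. Since $x\in {}_yW_z$ is already marked, at most $|_yW_z|-1$ further moves are possible, so the final size is at most $3 + (|_yW_z|-1) = 2 + |_yW_z| \le 2 + \max\{|_uW_v|: uv\in E(G)\}$.

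The main obstacle is the second case: without the condition defining $\mathcal{G}$, one cannot guarantee that Blocker has a legal response creating an edge inside the marked set, and the bound from Theorem~\ref{thm:bound-with-uWv} becomes unavailable. The equidistance property is exactly what is needed both to keep Blocker's move legal and to place the existing vertex $x$ inside $_yW_z$, which is what allows the final count to collapse from $3+|_yW_z|$ to $2+|_yW_z|$.
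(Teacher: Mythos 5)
Your proposal is correct and follows essentially the same strategy as the paper's proof: handle the adjacent case by the argument of Theorem~\ref{thm:bound-with-uWv}, and otherwise use the defining property of ${\cal G}$ to let Blocker play a neighbour $z$ of Builder's vertex $y$ equidistant from $x$, so that all later moves lie in $_yW_z$, which already contains $x$. Your explicit check that $\{x,y,z\}$ is in general position is a small extra verification that the paper leaves implicit, but the argument is otherwise identical.
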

	
	\begin{proof}
		Consider the B'-game and let $u$ and $v$ be the first two moves of Blocker and Builder, respectively. If $uv\in E(G)$, then the same argument as given in the proof of Theorem~\ref{thm:bound-with-uWv} gives $\gpg'(G) \le 2 + \max\{|_xW_y|:\ xy\in E(G)\}$. Assume hence that $d_G(u,v)\ge 2$. By assumption, there exists a vertex $w$ such that $vw\in E(G)$ and $d_G(u,v) = d_G(u,w)$. It follows that $w$ is a legal second move of Blocker. After these three moves have been played, only vertices from $_vW_w$ will be played in the rest of the game. Since $u\in\ _vW_w$, we again have $\gpg'(G) \le 2 + \max\{|_xW_y|:\ xy\in E(G)\}$.
	\end{proof}
	
	The upper bound in Theorem~\ref{thm:bound-with-uWv-for-B'-game} is attained by $C_5$, the Petersen graph and complete graphs. 
	
	To demonstrate that the class of graphs ${\cal G}$ can be large, consider the following. Let ${\cal G}_2$ be the set of graphs $G$ such that for any vertices $x, y\in V(G)$ with $d_G(x,y) = 2$ there exists an edge $yz$, such that $x$ is equidistant from $y$ and $z$. Recall that the \emph{lexicographic product} $G\circ H$ of graphs $G$ and $H$ is the graph with the vertex set $V(G)\times V(H)$, vertices $(g,h)$ and $(g',h')$ being adjacent if either $gg'\in E(G)$ or $g=g'$ and $hh'\in E(H)$. Then we have: 
	
	\begin{proposition}
		\label{prop:lexicographic}
		If $G$ is a connected graph with $n(G)\ge 2$ and $H$ is a graph from ${\cal G}_2$, then $G\circ H \in {\cal G}$.
	\end{proposition}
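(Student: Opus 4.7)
The plan is to check the $\mathcal{G}$-condition for $G\circ H$ directly, using the standard distance formula in a lexicographic product — namely $d_{G\circ H}((g,h),(g',h'))=d_G(g,g')$ whenever $g\ne g'$, and within a single $H$-layer it equals $0$, $1$, or $2$ according as the two vertices coincide, are $H$-adjacent, or neither (the value $2$ uses that $G$ has an edge, which is guaranteed by $n(G)\ge 2$ and connectedness). I would split the verification according to whether the two $G$-coordinates of the chosen pair agree.

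Given $x=(g_x,h_x)$ and $y=(g_y,h_y)$ with $d_{G\circ H}(x,y)\ge 2$, I would dispose of the case $g_x\ne g_y$ first: here the target distance is $d_G(g_x,g_y)$, and replacing $h_y$ by any $H$-neighbour produces a vertex $z$ adjacent to $y$ with the same $G$-coordinate as $y$, so $d_{G\circ H}(x,z)=d_G(g_x,g_y)$ matches automatically. The substantive case is $g_x=g_y$, in which $d_{G\circ H}(x,y)=2$, $h_x\ne h_y$, and $h_xh_y\notin E(H)$; I need an $H$-edge $h_yh_z$ whose other endpoint is both distinct from and $H$-non-adjacent to $h_x$, for then $z=(g_y,h_z)$ satisfies $d_{G\circ H}(x,z)=2$. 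If $d_H(h_x,h_y)=2$ the hypothesis $H\in\mathcal{G}_2$ supplies exactly such an edge. If $d_H(h_x,h_y)\ge 3$, or if $h_x$ and $h_y$ lie in different components of $H$, then \emph{any} $H$-neighbour of $h_y$ is valid: it cannot equal $h_x$ (else $h_xh_y\in E(H)$) nor be $H$-adjacent to $h_x$ (else $d_H(h_x,h_y)\le 2$).

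The main technical point to worry about is that both witnesses above are chosen among $H$-neighbours of $h_y$, so the argument implicitly requires $h_y$ to have at least one such neighbour. When $h_y$ is isolated in $H$, every neighbour of $y$ in $G\circ H$ sits in a different $G$-layer and is at distance $d_G(g_x,\cdot)$ from $x$, which need not match $d_{G\circ H}(x,y)$. I would either treat no-isolated-vertex as a tacit hypothesis on $H$ (harmless because isolated vertices never activate the $\mathcal{G}_2$ condition) or strip them off at the start; once this mild nuisance is cleared, the remaining argument reduces to a routine unpacking of the lexicographic distance formula.
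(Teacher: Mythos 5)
Your argument is essentially the paper's: the same case split on whether the $G$-coordinates agree, the same appeal to $\mathcal{G}_2$ when $d_H(h_x,h_y)=2$, and an $H$-neighbour of $h_y$ (arbitrary, or on a shortest path) in the remaining cases. Your caveat about isolated vertices of $H$ is a genuine issue that the paper's proof passes over silently --- it too selects an $H$-neighbour of $h'$ without justification, and the statement really does fail without that hypothesis (e.g.\ $P_3\circ K_1\cong P_3\notin\mathcal{G}$ even though $K_1\in\mathcal{G}_2$ vacuously) --- so flagging and discharging it is a small but real improvement.
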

	
	\begin{proof}
		Let $(g,h)$ and $(g',h')$ be arbitrary vertices of $G\circ H$ with $d_{G\circ H}((g,h), (g',h')) \ge 2$. We distinguish two cases. 
		
		Assume first that $g=g'$. Then $d_{G\circ H}((g,h), (g',h')) = 2$ by the structure of the lexicographic product. In the first subcase, assume that $d_{H}(h,h') = 2$. Since $H\in {\cal G}_2$, there exists a neighbour $h''$ of $h'$ in $H$ such that $d_{H}(h,h'') = 2$. Then also $d_{G\circ H}((g,h), (g,h'')) = 2$. In the second subcase, assume that $d_{H}(h,h') \ge 3$. Now let $h''$ be a neighbour of $h'$ such that $d_{H}(h,h'') = d_{H}(h,h') - 1$. Then we have $d_{G\circ H}((g,h), (g,h')) = 2 = d_{G\circ H}((g,h), (g,h''))$. 
		
		Assume second that $g\ne g'$. Let $h''$ be an arbitrary neighbour of $h$ in $H$. Then we have $d_{G\circ H}((g,h), (g',h')) = d_{G\circ H}((g,h), (g',h''))$ and we are done.
	\end{proof}
	
	Clearly, ${\cal G}\subseteq {\cal G}_2$. Moreover, ${\cal G}$ is a proper subset of ${\cal G}_2$ as the generalised Petersen graph $P(10,2)$ (alias the dodecahedral graph) demonstrates; for the definition of generalised Petersern graphs see, for instance,~\cite[p.~20]{Bondy-2008}. Indeed, since each pair of vertices of $P(10,2)$ which is at distance $2$ lies on a common (isometric) $5$-cycle, we see that $P(10,2)\in {\cal G}_2$. On the other hand, if $u$ and $v$ are two antipodal vertices of $P(10,2)$, then $d_{P(10,2)}(u,v)=5$, but for any neighbour $w$ of $v$ we have $d_{P(10,2)}(u,w)=4$. We conclude that $P(10,2)\notin {\cal G}$. 
	
	Trivially, $\gpg(K_n) = n$ for $n\ge 1$. It is also straightforward to see that if $n \geq 6$ is even, then $\gpg(C_n) = 2$ and $\gpg'(C_n) = 3$, while if $n$ is odd, then $\gpg(C_n) = \gpg'(C_n) = 3$.
	
	\begin{theorem}\label{the:buildblock multipartite}
		If $t\ge 2$ and $r_1 \geq \cdots \geq r_t\ge 2$, then 
		\[ \gpg(K_{r_1,\dots,r_t}) = \min \{ r_1,t\} \]
		and
		\[ \gpg '(K_{r_1,\dots,r_t}) = \max \{ r_t,t\}.\]
	\end{theorem}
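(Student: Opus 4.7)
The plan is to first characterise the general position sets of $G=K_{r_1,\dots,r_t}$ structurally, then reduce both games to a short minimax over the opening moves. Since $\diam(G)=2$, a three-element subset $\{a,b,c\}$ fails to be in general position exactly when one of the three lies on a length-$2$ shortest path between the other two; in a complete multipartite graph this occurs iff exactly two of $a,b,c$ lie in a common part $V_i$ while the third lies in some other $V_j$ (which then mediates the length-$2$ path between the two vertices of $V_i$). Hence $S\subseteq V(G)$ is in general position if and only if either $S\subseteq V_i$ for some $i$, or $|S\cap V_i|\le 1$ for every~$i$ (a \emph{transversal}).

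This characterisation produces a sharp dichotomy after only two moves have been made. If the first two chosen vertices both lie in some part $V_i$, then every subsequent move must also lie in $V_i$, for otherwise the new vertex would form a three-in-a-line with the two selected vertices of $V_i$; by Lemma~\ref{lem:useful}, the game then terminates precisely when all of $V_i$ has been played, for a total of $r_i$ vertices. If instead the first two chosen vertices lie in distinct parts, then every subsequent move must introduce a previously unused part; since each part contains at least $r_t\ge 2$ vertices, Lemma~\ref{lem:useful} again forces the game to end with exactly one vertex in each of the $t$ parts, giving $t$ played vertices.

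Once this regime lock-in is in place, both values are immediate. In the B-game, if Builder opens in part $V_i$, Blocker chooses whichever of the two regimes yields the smaller outcome, namely $\min\{r_i,t\}$; since $\min\{r,t\}$ is nondecreasing in $r$ and $r_1$ is the largest, Builder opens in $V_1$, giving $\gpg(K_{r_1,\dots,r_t})=\min\{r_1,t\}$. In the B'-game, if Blocker opens in $V_i$, Builder picks the larger regime $\max\{r_i,t\}$; since $\max\{r,t\}$ is nondecreasing in $r$ and $r_t$ is the smallest, Blocker opens in $V_t$, giving $\gpg'(K_{r_1,\dots,r_t})=\max\{r_t,t\}$.

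The main obstacle is essentially the structural characterisation of general position sets together with the resulting lock-in after the second move; once these are secured, the game-theoretic analysis collapses to a one-line minimax on the first move. The only minor subtlety is verifying that the transversal regime actually fills out to size $t$, which is exactly where the hypothesis $r_t\ge 2$ is used to guarantee that every part still has an available vertex when its turn comes.
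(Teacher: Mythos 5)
Your proof is correct and takes essentially the same approach as the paper: classify the post-second-move regime by whether the first two vertices share a part (forcing the whole part, $r_i$ vertices) or not (forcing a transversal, $t$ vertices), then optimise the opening move, giving $\min\{r_1,t\}$ for Builder first and $\max\{r_t,t\}$ for Blocker first. Two harmless quibbles: Lemma~\ref{lem:useful} does not literally apply in the transversal regime (the set of currently playable vertices is not itself in general position when an unused part has two or more vertices, though the conclusion follows directly), and the hypothesis $r_t\ge 2$ is really needed to guarantee the single-part regime is available as an option after the first move, not to fill out the transversal.
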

	\begin{proof}
		Assume that Builder starts with the move $a_1$ in a part $X$ of order $|X| = r_i$.  
		
		If Blocker can select their second vertex from  $X \setminus \{ a_1\} $, then all subsequent vertices chosen must lie in $X$, so that optimal play produces the maximal general position set $X$. If Blocker chooses their first vertex in a part $X'$ not equal to $X$, then each subsequent vertex chosen must lie in a new part, so that the game produces a clique with one vertex from each part, yielding a general position set of order $t$. Hence Blocker can limit Builder to a general position set of order $\min \{ r_i,t\} $. Hence it is in Builder's interests to choose their first vertex in a part of order $r_1$. Thus the general position set produced by optimal play has order $\min \{ r_1,t\} $.
		
		A similar argument shows that if Blocker's first move in part $X$ of order $r_i$, then Builder has the choice of either moving within $X$ (resulting in a general position set of order $r_i$), or selecting a vertex in a new part $X' \not = X$, in which case the game builds a clique of order $t$. Therefore the order of the general position set produced is $\max \{ r_i,t\} $ and, in order to minimise this, Blocker must start in a partite set of order $r_t$.
	\end{proof}
	
	The result for complete multipartite graphs allows us to prove a realisation result for the game general position number versus the $\gp $-number and lower $\gp $-number as a corollary.
	
	\begin{corollary}
		For any integers $a,b,c$ with $2 \leq c \leq b \leq a$ there exists a graph $G$ with $\gp (G) = a$, $\gpg(G) = b$ and $\gp ^-(G) = c$.
	\end{corollary}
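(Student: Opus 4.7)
The plan is to realise the triple $(a,b,c)$ using a suitably chosen complete multipartite graph and to read off the three invariants from Theorem~\ref{the:buildblock multipartite} together with the formulas for $\gp$ and $\gp^-$ of complete multipartite graphs already recalled in the introduction.

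First I would set $t = b$ and consider the graph $G = K_{a,c,c,\ldots,c}$ with one part of size $a$ and $b-1$ parts of size $c$, so the sequence of part sizes is $r_1 = a$, $r_2 = \cdots = r_t = c$. The inequalities $2 \leq c \leq b \leq a$ guarantee that this sequence is non-increasing, that each $r_i \geq 2$, and that $t = b \geq 2$, so Theorem~\ref{the:buildblock multipartite} applies.

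Next I would compute each invariant using the stated formulas. The introduction gives $\gp(K_{r_1,\dots,r_t}) = \max\{r_1,t\} = \max\{a,b\} = a$, since $a \geq b$. Theorem~\ref{the:buildblock multipartite} yields $\gpg(G) = \min\{r_1,t\} = \min\{a,b\} = b$. Finally, the formula $\gp^-(K_{r_1,\dots,r_t}) = \min\{r_t,t\}$ from the introduction gives $\gp^-(G) = \min\{c,b\} = c$, since $c \leq b$.

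There is really no obstacle here beyond making the right choice of part sizes; the only thing to double-check is that the boundary case $b = c$ (or $a = b$, or $a = b = c$) is covered, but in each case the formulas still evaluate as required, so a single uniform construction handles the whole range $2 \leq c \leq b \leq a$.
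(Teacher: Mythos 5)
Your construction is correct and is essentially the paper's own proof: the paper likewise applies Theorem~\ref{the:buildblock multipartite} together with the formulas $\gp(K_{r_1,\dots,r_t})=\max\{r_1,t\}$ and $\gp^-(K_{r_1,\dots,r_t})=\min\{r_t,t\}$ to a complete multipartite graph with $r_1=a$, $t=b$, $r_t=c$. Your only addition is the explicit choice of the intermediate part sizes (all equal to $c$), which the paper leaves unspecified; the verification of the three values is identical.
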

	\begin{proof}
		Recall that $\gp (K_{r_1,\dots ,r_t}) = \max \{ r_1,t\} $ and $\gp ^- (K_{r_1,\dots ,r_t}) = \min \{ r_t,t\} $   Hence the result follows by applying Theorem~\ref{the:buildblock multipartite} to a complete multipartite graph $K_{r_1,\dots,r_t}$ with $r_1 = a$, $t = b$ and $r_t = c$. 
	\end{proof}
	
	We now determine the result of the general position game on Kneser graphs $K(n,2)$. Recall that the vertices of this graph are the subsets of order two of $[n]$ and two subsets are adjacent if and only if they are disjoint  sets. The general position number of $K(n,2)$ was shown to be $n-1$ for $n \geq 7$ in~\cite{Knesergraph} and the lower general position number was determined in~\cite{StefKlaKriTui} as follows.
	
	\begin{theorem}\label{thm:lowergp Kneser}
		{\rm \cite[Theorem 13]{StefKlaKriTui}}
		The lower gp-number of the Kneser graph $K(n,2)$ is 
		\[ \gp ^- (K(n,2))=\begin{cases}
			3, &\text{if $n = 3$, } \\
			6, &\text{if $n = 4$, } \\
			4, &\text{if $n = 5$, } \\
			\left \lfloor \frac{n}{2} \right \rfloor , & \text{if $6 \leq n \leq 11$, } \\
			6, & \text{if $n \geq 12$.}
		\end{cases}\]
	\end{theorem}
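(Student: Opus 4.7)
The plan is to translate the general position condition into concrete combinatorics using the structure of $K(n,2)$. Since $K(n,2)$ has diameter $2$ for every $n \geq 5$ (two $2$-subsets are adjacent iff disjoint and at distance $2$ iff they meet in exactly one element), a set $S$ of $2$-subsets of $[n]$ is in general position if and only if there do not exist $A, B, C \in S$ with $|A \cap C| = 1$ and $B \cap (A \cup C) = \emptyset$. Equivalently, whenever $A, C \in S$ satisfy $|A \cup C| = 3$, every other member of $S$ must meet the $3$-set $A \cup C$. Maximality of $S$ then translates into: every $2$-subset $X \notin S$, when added, produces such a forbidden triple.

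The three small cases are dispatched directly. $K(3,2)$ has no edges and $K(4,2)$ is a perfect matching, so neither graph contains a shortest path on three vertices; hence every subset is trivially in general position, and the unique maximal general position set is the entire vertex set, of orders $3$ and $6$ respectively. For $n=5$, $K(5,2)$ is the Petersen graph, and $\gp^-=4$ is obtained from a short case analysis exploiting vertex-transitivity: up to automorphism one fixes a first vertex, observes that any two further carefully chosen vertices force the resulting general position set to have size at least $4$, and exhibits a concrete maximal general position set of size $4$.

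For $6 \leq n \leq 11$, the upper bound follows by taking a maximum matching in $K(n,2)$, i.e.\ a clique of $\lfloor n/2\rfloor$ pairwise disjoint pairs. Since at most one element of $[n]$ is uncovered and the matching contains at least three pairs, any excluded pair $X$ necessarily meets some matching pair $P_i$ in exactly one element; then there is a third pair $P_j$ disjoint from both $X$ and $P_i$, so adding $X$ violates the general position condition via the triple $(X, P_j, P_i)$. A quick count shows that the matching has size exactly $\lfloor n/2\rfloor$. For $n\geq 12$, the upper bound $6$ requires a fixed construction that does not grow with $n$: here one combines a triangle $\{\{1,2\},\{2,3\},\{1,3\}\}$ with a few further pairs chosen so that any remaining $2$-subset either shares one element with two members of the construction (producing a forbidden triple) or is disjoint from a sunflower-like substructure forcing a violation.

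The main obstacle is the matching lower bound. Here one assumes a maximal general position set $S$ of sub-critical size and classifies it by its intersection pattern: whether $S$ is (essentially) a clique of pairwise disjoint pairs, a sunflower-type intersecting family with a common element, a triangle, or a mixture of these. In each case, the few elements of $S$ cover only a small portion of $[n]$, leaving many "free" elements; exploiting this excess, one exhibits an explicit $2$-subset outside $S$ that avoids every forbidden triple and hence may legally be added, contradicting maximality. The delicate point is keeping track of all intersection patterns simultaneously while ensuring the added pair does not accidentally create some other forbidden triple; this is what pins down the exact thresholds $n=6$ and $n=12$ in the statement.
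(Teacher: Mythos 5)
First, a point of comparison you could not have known: the paper under review does \emph{not} prove this statement at all --- it is quoted verbatim from \cite[Theorem 13]{StefKlaKriTui} --- so there is no internal proof to measure your attempt against, and I can only judge it on its own merits. Your reformulation of general position in $K(n,2)$ for $n\ge 5$ (diameter $2$, so the forbidden configurations are triples $A,B,C$ with $|A\cap C|=1$ and $B\cap(A\cup C)=\emptyset$), your treatment of the degenerate cases $n=3,4$, and your upper bound $\gp^-(K(n,2))\le\left\lfloor n/2\right\rfloor$ via a maximum matching for $6\le n\le 11$ are all correct.

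There are, however, two genuine gaps. The smaller one concerns the $n\ge 12$ upper bound: you never actually specify the $6$-vertex construction, and the justification you sketch fails as stated --- a pair $X$ that ``shares one element with two members'' of the triangle $\{\{1,2\},\{2,3\},\{1,3\}\}$ does \emph{not} create a forbidden triple (for instance $\{1,4\}$ may legally be added to that triangle, since no member of the triangle is disjoint from $\{1,2,4\}$ or from $\{1,3,4\}$). What is needed is the full set of six $2$-subsets of a fixed $4$-set: there the complementary pair (e.g.\ $\{3,4\}$ against $\{1,2\}$) supplies the disjoint middle vertex that blocks every outside pair. The larger gap is the lower bound, namely that \emph{every} maximal general position set has at least $\left\lfloor n/2\right\rfloor$ (resp.\ $6$, resp.\ $4$) vertices. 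This is where essentially all of the work in \cite{StefKlaKriTui} lies; it amounts to a classification of the maximal general position sets of $K(n,2)$, which is precisely the description the present paper reuses in its proof of Theorem~\ref{thm:Kneser}. Your proposal only names the intended case split (pairwise-disjoint clique, sunflower with a common element, triangle, or mixture) and asserts that in each sub-critical case an addable pair can be exhibited. Without carrying out that classification and the accompanying counting --- in particular, verifying that the answer changes exactly at $n=6$ and $n=12$ rather than at neighbouring values --- the argument is a plan for a proof rather than a proof.
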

	It turns out that for $n \geq 12$ the game general position numbers coincide with the lower general position number.
	
	\begin{theorem}
		\label{thm:Kneser}
		For $n \geq 4$ we have $\gpg(K(n,2)) = \gpg '(K(n,2)) = 6$.
	\end{theorem}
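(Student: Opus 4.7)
The plan is to recast the problem on $K(n,2)$ in terms of edges of $K_n$: vertices of $K(n,2)$ are the 2-subsets of $[n]$, and for $n\ge 5$ three such vertices $A,B,C$ form a three-in-line precisely when two of them (say $A$ and $C$) share one element while the third ($B$) is disjoint from $A\cup C$. The target configuration of both games will be $E(K_4)$, the six edges of some $K_4$ on four vertices; this is a general position set because any pair of $K_4$-edges sharing a vertex has its three-vertex union inside the 4-set while the ``middle'' edge required to complete a three-in-line would have to lie outside. It is even maximal for $n\ge 5$, since every edge incident to a vertex outside the 4-set is disjoint from some such triangle-like union. The case $n=4$ is handled separately: $K(4,2) = 3K_2$ is disconnected, every vertex subset is in general position, and both games end with all six vertices played.

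The central structural claim to prove is the following lock: if at some stage the played set contains three edges whose vertex-union is a 4-set $\{a,b,c,d\}$ and these three edges are not all incident to a common vertex of that 4-set, then every further legal edge must lie inside $E(K_4)$ on $\{a,b,c,d\}$. An edge entirely outside $\{a,b,c,d\}$ is disjoint from the union of some sharing pair of the three, producing a three-in-line; an edge with exactly one endpoint outside is, by a short case check on which of the three it shares a vertex with, disjoint from the union of a suitable sharing pair. Since $E(K_4)$ itself is maximal, once the lock occurs the remaining three $K_4$-edges will be played in turn and the game terminates at exactly six vertices.

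It then remains to show that each player can force the lock to occur and that neither player can be steered into a smaller terminating configuration. For the lower bound $\gpg(K(n,2))\ge 6$, Builder in the B-game plays $e_1=\{1,2\}$ and replies to $e_2$ with either a cross edge $e_3=\{1,a\}$ (when $e_2=\{a,b\}$ is disjoint from $e_1$) or a diagonal $e_3=\{2,k\}$ (when $e_2=\{1,k\}$ shares a vertex with $e_1$), so that $\{e_1,e_2,e_3\}$ spans a 4-set without a common vertex; in the B'-game Builder plays $f_2=\{1,3\}$ after $f_1=\{1,2\}$, and then completes a non-star cover of some $\{1,2,3,k\}$ with $f_4$ depending on Blocker's $f_3$ (which must intersect $\{1,2,3\}$ or else it is already disjoint from $\{1,2,3\}$ and so inadmissible). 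The upper bounds $\gpg(K(n,2))\le 6$ and $\gpg'(K(n,2))\le 6$ are symmetric: Blocker plays to close a non-star 4-cover at the earliest opportunity. The main technical obstacle is to rule out the two escape routes, a growing star $\{\{1,2\},\{1,3\},\{1,4\},\ldots\}$ and a growing matching $\{\{1,2\},\{3,4\},\{5,6\},\ldots\}$, which could in principle reach $n-1$ and $\lfloor n/2\rfloor$ vertices respectively; a short case check shows that both are cut off the moment a non-star third edge is present, because any further star-extension or matching-extension is then disjoint from a sharing pair's union and fails the three-in-line test.
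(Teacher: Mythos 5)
Your overall plan---recasting vertices of $K(n,2)$ as edges of $K_n$, proving a ``lock'' lemma stating that once the played set contains a non-star triple of edges covering a $4$-set the game is confined to the six edges of that $K_4$, and then arguing that each player can force or withstand the lock---is sound and genuinely different from the paper's proof, which instead leans on the classification of maximal general position sets of $K(n,2)$ behind Theorem~\ref{thm:lowergp Kneser} and runs a move-by-move case analysis over ranges of $n$. Your route is more self-contained, but as written it has two concrete gaps. First, the justification of the lock is incomplete for an edge meeting the $4$-set in an \emph{internal} vertex of the covering path: with played path $\{a,b\},\{b,c\},\{c,d\}$ and candidate $B=\{b,e\}$, $e\notin\{a,b,c,d\}$, the edge $B$ is not disjoint from the union of any sharing pair among the three played edges (each such union contains $b$), so the mechanism you describe does not fire. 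The edge is still illegal, but for a different reason: $B$ and $\{a,b\}$ share $b$, and the played edge $\{c,d\}$ is disjoint from $\{a,b\}\cup B=\{a,b,e\}$, so $B$ is an \emph{endpoint} rather than the middle of the offending geodesic. This case must be added, otherwise the lock is not established.

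Second, in the B-game lower bound your reply $e_3=\{2,k\}$ to $e_2=\{1,k\}$ produces the triangle $\{1,2\},\{1,k\},\{2,k\}$, whose vertex-union is the $3$-set $\{1,2,k\}$, not a $4$-set; the assertion that $\{e_1,e_2,e_3\}$ ``spans a 4-set without a common vertex'' is false in this branch, and the lock lemma does not apply at that point. The branch is repairable: either have Builder play $e_3=\{2,m\}$ for a fresh symbol $m$ (available since $n\ge 5$), which gives a genuine path covering $\{1,2,k,m\}$, or add the observation that after the triangle the only legal moves are edges with exactly one endpoint in $\{1,2,k\}$, and any such fourth move creates a non-star cover of a $4$-set, after which the lock takes over. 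Finally, the upper bounds are asserted rather than proved (``Blocker plays to close a non-star $4$-cover at the earliest opportunity''): you need to exhibit Blocker's strategy explicitly---an intersecting second move in the B-game eliminates the matching escape of size $\lfloor n/2\rfloor$, and the triangle edge $\{2,3\}$ is a legal reply to a growing star $\{1,2\},\{1,3\},\{1,j\}$ that completes a non-star cover---and verify legality of these moves at the moment they are needed. With these repairs the argument goes through and would constitute a valid alternative to the paper's proof.
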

	\begin{proof}
		We use the description of the maximal general position sets from Theorem~\ref{thm:lowergp Kneser}. We first consider the B-game played on $K(n,2)$ and assume without loss of generality that Builder starts with the move $a_1 = \{ 1,2\}$.  Assume further that Blocker's first move is a 2-subset disjoint from $a_1$, say $b_1 = \{ 3,4\}$.  If Builder's next move is a 2-subset $a_2$ of  $\{ 1,2,3,4\} $, then the general position set built during the game is the set of six 2-subsets of $[4]$. Otherwise if $a_2$ is disjoint with $\{ 1,2,3,4\} $ then Builder and Blocker must build a clique of order $\left \lfloor \frac{n}{2} \right \rfloor $. Hence for $n \leq 11$ Builder would choose a 2-subset of $[4]$, for $n \in \{12,13\}$ Builder's choice is arbitrary, and for $n \geq 14$ Builder can choose $\{ 5,6\} $. 
		
		Suppose that Blocker's first move is $b_1 = \{ 1,3\} $. If Builder picks $a_2 = \{ 1,i\} $ for some $4 \leq i \leq n$, then Blocker can either pick a set $b_2 = \{ 1,j\} $,  $j \in [n]\setminus\{ 2,3,i\} $, in which case they build a general position set of order $n-1$, or else Blocker can pick a 2-subset of $\{ 1,2,3,i\} $, in which case they produce a general position set of order six. Therefore for $4 \leq n \leq 6$ Blocker will choose the first option and for $n \geq 7$ Blocker can choose the second. If Builder's second move $a_2$ contains 2 or 3, then they will build a general position set of order six. Hence if $n \geq 7$ then either option for $a_2$ will result in a general position set of order six and for $4 \leq n \leq 6$ Builder will choose a set containing 2 or 3, again leading to a general position set of order six.
		
		In summary, if Blocker chooses $b_1 =\{ 1,3\} $, then the game gives a set of order six, whereas if $b_1 = \{3,4\} $, then the game will give a general position set of order six for $4 \leq n \leq 11$ and order $\left \lfloor \frac{n}{2} \right \rfloor $ for $n \geq 12$. Thus the best that Blocker can do is restrain Builder to a general position set of order six.
		
		A similar argument establishes the value of $\gpg '(K(n,2))$.
	\end{proof}
	
	\section{Graphs with small game general position numbers}
	\label{sec:number=2}
	
	Since $\gpg(G)\ge 2$ and $\gpg'(G)\ge 2$ hold for any non-trivial graph $G$, it is natural to ask when equality can hold. We now proceed to answer this question. 
	
	Let $M = (X, d_M)$ be an arbitrary metric space and $x,y\in X$. Then the {\em line} ${\cal L}_M(x,y)$ induced by $x$ and $y$ is the following set of points from $M$:  
	$$\{z\in X:\ d_M(x,y) = d_M(x,z) + d_M(z,y)\ {\rm or}\ d_M(x,y) = |d_M(x,z) - d_M(z,y)|\}\,.$$
	The line ${\cal L}_M(x,y)$ is {\em universal} if it contains the whole set $X$. Considering a graph $G$ as a metric space, these definitions transfer directly to $G$, see~\cite{Rodriguez-2022}. We can now describe the graphs $G$ with $\gpg(G) = 2$ by the following result which generalises Corollary~\ref{cor:bipartite}. 
	
	\begin{lemma}
		\label{lem:universal-line}
		If $G$ is a graph with $n(G)\ge 2$, then $\gpg(G) = 2$ if and only if each vertex of $G$ is contained in a pair that induces a universal line. In particular, if $G$ is a vertex-transitive graph with $\gp ^-(G) = 2$, then $\gpg(G) = 2$. Moreover, $\gpg '(G) = 2$ if and only if there exists a vertex $u \in V(G)$ such that for any $v \in V(G) \setminus \{ u\} $ the pair $\{ u,v\} $ induces a universal line.
	\end{lemma}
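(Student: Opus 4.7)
The plan is to exploit a direct translation between the universal line condition and maximality of two-vertex general position sets, and then analyse each of the two games separately.

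First I would establish the key dictionary: a pair $\{x,y\}$ is a maximal general position set if and only if ${\cal L}_G(x,y) = V(G)$. This is just an unfolding of definitions. The triple $\{x,y,z\}$ fails to be in general position precisely when one of its three vertices lies on a shortest path between the other two, which is equivalent to either $d_G(x,y) = d_G(x,z) + d_G(z,y)$, $d_G(y,z) = d_G(y,x) + d_G(x,z)$, or $d_G(x,z) = d_G(x,y) + d_G(y,z)$. A routine rearrangement of the latter two equations shows this disjunction is the same as $d_G(x,y) = d_G(x,z) + d_G(z,y)$ or $d_G(x,y) = |d_G(x,z) - d_G(z,y)|$, which is exactly the statement that $z \in {\cal L}_G(x,y)$.

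With this dictionary in hand, the B-game characterisation proceeds as follows. For the ``if'' direction, suppose that every vertex of $G$ lies in some universal-line pair. When Builder opens with any vertex $u$, Blocker selects a partner $v$ with ${\cal L}_G(u,v) = V(G)$, whence $\{u,v\}$ is maximal and the game terminates at length two; combined with $\gpg(G) \ge 2$ from \eqref{eq:for-B-game}, this gives $\gpg(G) = 2$. Conversely, if some vertex $u^*$ is in no universal-line pair, then Builder may open with $u^*$, forcing every possible Blocker response $v$ to leave $\{u^*,v\}$ non-maximal, so that a legal third move exists and hence $\gpg(G) \ge 3$. The vertex-transitive corollary is then immediate: $\gp ^-(G) = 2$ produces one maximal pair $\{u,v\}$, and for any prescribed vertex $w$ an automorphism sending $u$ to $w$ maps this pair onto a universal-line pair containing $w$, so the hypothesis of the main equivalence is met.

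The B'-game statement is argued along the same lines but with the two openings interchanged. If there exists $u$ with ${\cal L}_G(u,v) = V(G)$ for all $v \ne u$, Blocker opens with $u$ and the game ends at length two whatever Builder plays. Conversely, if no such $u$ exists, then for every choice of opening $u$ by Blocker there is some vertex $v$ with ${\cal L}_G(u,v) \ne V(G)$; since Builder plays to maximise the length of the game, Builder responds with such a $v$ and the game continues past length two. I do not expect a substantial technical obstacle. The only delicate point is the asymmetry between the two games: in the B-game, Blocker tailors a distinct response to each Builder opening, so universality is required only vertex-by-vertex, whereas in the B'-game Blocker must commit to a single vertex $u$ that works uniformly against every reply by Builder — this is exactly what accounts for the difference in the two characterisations.
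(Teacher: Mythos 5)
Your proposal is correct and follows essentially the same route as the paper's proof: Blocker answers Builder's opening with a universal-line partner in the B-game, Blocker commits to a single uniformly good vertex in the B'-game, and the vertex-transitive case is handled by pushing one maximal pair around with automorphisms. The only difference is that you make explicit the dictionary between maximal two-element general position sets and universal lines (which the paper leaves implicit), and you phrase the ``only if'' direction of the B-game contrapositively; both are fine.
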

	
	\begin{proof}
		Assume that $\gpg(G) = 2$. Let $x$ be an arbitrary vertex of $G$ and consider the B-game in which Builder selects $x$ as the first move. Since $\gpg(G) = 2$, there exists a reply $x'$ of Blocker such that the game is over after this move. This means that the line ${\cal L}_G(x,x')$ is universal. Conversely, assume that each vertex of $G$ is contained in a universal line. Consider the B-game and let $x$ be the first optimal move of Builder. By our assumption, there exists a vertex $x'$ such that the line ${\cal L}_G(x,x')$ is universal. Then Blocker replies with the move $x'$ and the game is over. 
		
		Assume that $G$ is vertex-transitive with $\gp ^-(G) = 2$. Let $\{x,x'\}$ be a maximal general position set of $G$. Then the line ${\cal L}_G(x,x')$ is universal. If $y$ is an arbitrary vertex of $G$, let $\alpha$ be an automorphism of $G$ with $\alpha(x) = y$. Then the line ${\cal L}_G(\alpha(x),\alpha(x')) = {\cal L}_G(y,\alpha(x'))$ is a universal line containing $y$. As above, $\gpg(G) = 2$.
		
		The observation for graphs with $\gpg '(G) = 2$ follows similarly.
	\end{proof}
	
	For example, cocktail-party graphs $K_{2,\ldots, 2}$ are vertex-transitive with lower general position number $2$, hence $\gpg(K_{2,\ldots, 2}) = 2$. 
	
	We now turn our attention to graphs $G$ with $\gpg'(G) = 2$. To this end we introduce the family of graphs ${\cal H}$ containing all graphs of order at least $2$ that are constructed in the following way. Let $G_i$, $i\in [k]$, be a collection of $k\ge 0$ complete bipartite graphs $G_i = K_{2,n_i}$, $n_i \ge 2$, and in each graph $G_i$ let the partite set with two vertices be $X_i = \{ u_i,u_i'\} $ (if $n_i = 2$, then obviously the choice is arbitrary). Then identify all the vertices $u_i$, $i\in [k]$, into a single vertex $u$. Next, add a pendant path of arbitrary length (possibly $0$) to each of the vertices $u_i'$. Finally, attach an arbitrary number (possibly $0$) of paths at $u$. An example of a graph belonging to the family ${\cal H}$ is shown in Fig.~\ref{fig:gpg=2}.  
	
	\begin{figure}[ht!]
		\centering
		\begin{tikzpicture}[x=0.2mm,y=-0.2mm,inner sep=0.2mm,scale=0.7,thick,vertex/.style={circle,draw,minimum size=10}]
			\node at (0,0) [vertex] (v1) {$u$};
			\node at (100,-100) [vertex] (v2) {};
			\node at (100,-50) [vertex] (v3) {};
			\node at (100,0) [vertex] (v4) {};
			
			\node at (100,50) [vertex] (v5) {};
			\node at (100,100) [vertex] (v6) {};
			
			\node at (200,-50) [vertex] (v7) {};
			\node at (200,75) [vertex] (v8) {};
			
			\node at (300,-50) [vertex] (v9) {};
			\node at (400,-50) [vertex] (v10) {};
			\node at (500,-50) [vertex] (v11) {};
			
			\node at (-100,-75) [vertex] (v12) {};
			\node at (-100,-25) [vertex] (v13) {};
			\node at (-100,25) [vertex] (v14) {};
			\node at (-100,75) [vertex] (v15) {};
			
			\node at (-200,-75) [vertex] (v16) {};
			\node at (-300,-75) [vertex] (v17) {};
			\node at (-200,-25) [vertex] (v18) {};

			\path

			(v1) edge (v2)
			(v1) edge (v3)
			(v1) edge (v4)
			(v1) edge (v5)
			(v1) edge (v6)
			
			(v7) edge (v2)
			(v7) edge (v3)
			(v7) edge (v4)
			
			(v8) edge (v5)
			(v8) edge (v6)
			
			(v7) edge (v9)
			(v9) edge (v10)
			(v10) edge (v11)
			
			(v1) edge (v12)
			(v1) edge (v13)
			(v1) edge (v14)
			(v1) edge (v15)
			
			(v12) edge (v16)
			(v16) edge (v17)
			(v13) edge (v18)
			
			;
		\end{tikzpicture}
		\caption{A graph from the family ${\cal H}$}
		\label{fig:gpg=2}
	\end{figure}
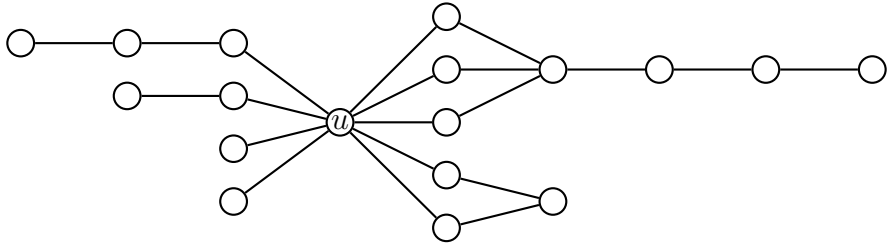

	\begin{theorem}
		\label{thm:gpg'=2}
		For any graph $G$, $\gpg '(G) = 2$ if and only if $G\in {\cal H}$. 
	\end{theorem}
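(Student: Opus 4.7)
The plan is to invoke Lemma~\ref{lem:universal-line}: $\gpg'(G) = 2$ iff there is a vertex $u \in V(G)$ such that $\mathcal{L}_G(u, v)$ is universal for every $v \neq u$. In both directions I take such a $u$ to be the central vertex of the $\mathcal{H}$-construction.

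For the forward direction, suppose $G \in \mathcal{H}$. Each component of $G - u$ is either a path (coming from an attached path at $u$) or a ``star with tail'' (coming from a $K_{2, n_i}$-block together with the pendant path from $u_i'$), and two vertices in different components of $G - u$ are joined by $G$-geodesics that all pass through $u$. Fix $v \neq u$ and $w \in V(G)$. If $v$ and $w$ lie in different components of $G - u$, then $d_G(w, v) = d_G(w, u) + d_G(u, v)$, which rearranges to $|d_G(u, w) - d_G(w, v)| = d_G(u, v)$ and hence $w \in \mathcal{L}_G(u, v)$. If $v$ and $w$ lie in the same component, one verifies the claim using the explicit within-component distances: for a path-component it is immediate, and for a star-with-tail component one handles the small number of configurations (with each of $v$ and $w$ being a middle vertex, the vertex $u_i'$, or a vertex on the pendant tail) using that every middle vertex lies on a shortest $u, u_i'$-path and that $u_i'$ lies on every shortest $u, q$-path for $q$ on the tail.

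For the converse, suppose $u$ satisfies the universal-line condition and write $L_i = \{v \in V(G) : d_G(u, v) = i\}$. Unpacking the betweenness conditions in $v' \in \mathcal{L}_G(u, v)$ shows that for $v \in L_i$ and $v' \in L_j$ with $i \leq j$ and $v \neq v'$, we have $d_G(v, v') \in \{j - i,\, j + i\}$, and in particular distinct vertices at level $i \geq 1$ are at mutual distance $2i$. Combined with the triangle inequality this yields: (i) $N(u)$ is independent; (ii) for $i \geq 2$, each vertex of $L_{i-1}$ has at most one neighbor in $L_i$ (two such children would be at distance $\leq 2 < 2i$); and (iii) for $i \geq 3$, each vertex of $L_i$ has a unique neighbor in $L_{i-1}$ (similarly). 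In particular, every $x \in L_1$ has at most one non-$u$ neighbor, and that neighbor (if present) lies in $L_2$.

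The critical step is that each component of $G - u$ contains at most one vertex of $L_2$. Suppose for contradiction that $y_1, y_2 \in L_2$ are distinct and in the same component, and take a simple $y_1, y_2$-path $P$ in $G - u$. If $P$ first steps to some $x \in L_1 \cap N(y_1)$, then $x$'s unique non-$u$ neighbor is $y_1$ and $P$ dead-ends immediately; otherwise $P$ steps to the unique $w_3 \in L_3 \cap N(y_1)$ provided by (ii), and then (iii) and (ii) force $P$ to continue along the tail $w_3 - w_4 - w_5 - \cdots$, a walk of unique successors that never revisits $L_2$ (because $w_3$'s only $L_2$-neighbor is $y_1$ and, for $i \geq 4$, a vertex of $L_i$ has no neighbor in $L_2$ at all). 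Hence $P$ cannot reach $y_2$, a contradiction. With this in hand, every component $C$ of $G - u$ either contains no $L_2$-vertex (in which case $C$ is a single $L_1$-leaf) or contains a unique $y \in L_2$ with level-$1$ neighborhood $N^-(y) \subseteq L_1$ and an optional tail $y - w_3 - w_4 - \cdots$: if $|N^-(y)| = 1$ then $C \cup \{u\}$ is a path attached at $u$, while if $|N^-(y)| \geq 2$ then $\{u, y\} \cup N^-(y)$ induces $K_{2, |N^-(y)|}$ with $y = u_i'$ carrying the pendant tail. This exhibits $G \in \mathcal{H}$; the tail-forcing argument just sketched is the main obstacle of the proof.
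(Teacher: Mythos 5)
Your proposal is correct, and its converse direction takes a noticeably different route from the paper's. The paper argues the converse game-theoretically from Blocker's optimal first vertex $u$: it shows that each vertex of $N^t(u)$ has at most one neighbour in $N^{t+1}(u)$ (else Builder forces a third vertex), that every cycle of $G$ passes through $u$ and that the shortest such cycle is a $4$-cycle, that each level $N^t(u)$ is independent, and that for $t\ge 3$ each vertex of $N^t(u)$ has a unique neighbour in $N^{t-1}(u)$ --- all via girth-cycle and isometric odd-cycle arguments --- before concluding that these facts place $G$ in ${\cal H}$. You instead funnel everything through the ``moreover'' part of Lemma~\ref{lem:universal-line} and recover the same local facts (i)--(iii) by pure distance arithmetic on the lines ${\cal L}_G(u,v)$ (distinct vertices of $L_i$ lie at distance $2i$, etc.), which avoids the cycle case analysis entirely. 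The more significant difference is your forced-tail argument showing that each component of $G-u$ contains at most one vertex of $L_2$, followed by the explicit classification of components into single leaves, pendant paths, and $K_{2,n}$-blocks with tails: this supplies in full the final assembly step that the paper compresses into ``from the above arguments we may conclude that $G$ belongs to ${\cal H}$,'' and it is the part of the proof that most needs to be written out. The forward direction is handled at the same level of detail in both treatments (a routine verification of the universal-line condition at the central vertex). I see no gaps in your argument.
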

	
	\begin{proof}
		Let $G\in {\cal H}$ and let $u$ be the vertex of $G$ as defined in the description of the class ${\cal H}$. It is then straightforward to check that for any vertex $v \neq u$, the pair $\{ u,v\} $ induces a universal line, so by Lemma~\ref{lem:universal-line} we have $\gpg '(G) = 2$ (i.e., if Blocker starts with $u$, then whichever vertex Builder selects for their next vertex, the game is over).
		
		Conversely, let $G$ be a arbitrary graph with $\gpg '(G) = 2$. Let $u$ be Blocker's first move selected optimally. Suppose that there is a vertex $v$ in $N^t(u)$ ($1 \leq t \leq \diam (G)-1$) such that $v$ has at least two neighbours $w_1,w_2$ in $N^{t+1}(u)$; then $\{ u,w_1,w_2\} $ is in general position and Builder could guarantee a general position set of order at least three by choosing $w_1$. Hence each vertex of $N^t(u)$ has at most one neighbour in $N^{t+1}(u)$.
		
		Suppose that $G$ contains a cycle, but that $u$ does not lie on any cycles. Let $C$ be a girth cycle of $G$, and $P$ be a shortest path from $u$ to $C$, terminating in a vertex $w \in V(C)$. Since a girth cycle in isometric, $u$ together with the two neighbours of $w$ on $C$ constitutes a general position set, so that if Builder plays a neighbour of $w$ on $C$, then Blocker is obliged to take a further move and $\gpg '(G) \geq 3$. Now suppose that $u$ lies on a cycle of $G$ and let $C$ be a shortest such cycle; as $\gp (C_n) = 3$ unless $n = 4$, if the length of $C$ does not equal four, then Builder can choose a vertex $v$ from $C$ such that $\{ v\} $ can be completed to a general position set of at least three vertices. 
		
		As $u$ does not lie on a triangle, $N(u)$ is independent. Let $W$ be the set of vertices in $N^2(u)$ that lie on a 4-cycle containing $u$. Since each vertex of $N(u)$ has at most one neighbour in $N^{2}(u)$, if $w_1,w_2 \in W$, then $N(w_1) \cap N(w_2) \cap N(u) =\emptyset $. Furthermore, each set $N^t(u)$ is an independent set, since otherwise $u$ would be contained in an odd cycle and considering a shortest such cycle Builder would again be able to force a general position set with at least three vertices. We also see that any vertex in $N^t(u)$, where $t \geq 3$, can have at most one neighbour in $N^{t-1}(u)$, for otherwise we could consider a vertex with smallest possible $t \geq 3$ and see that this would create an isometric odd cycle through $u$.
		
		From the above arguments we may conclude that any graph $G$ with $\gpg '(G) = 2$ belongs to ${\cal H}$. 
	\end{proof}
	
	In fact, every graph $G$ in the family ${\cal H}$ satisfies the first part of Lemma~\ref{lem:universal-line}, i.e., for every vertex $u \in V(G)$ there is a vertex $u'$ such that $\{ u,u'\} $ induces a universal line; therefore for each such graph we also have $\gpg (G) = 2$.
	
	\begin{corollary}\label{cor:realisation for two}
		If $\gpg '(G) = 2$, then $\gpg (G) = 2$.
	\end{corollary}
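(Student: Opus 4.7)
The plan is to deduce this directly from Lemma~\ref{lem:universal-line} without invoking the structural classification of Theorem~\ref{thm:gpg'=2}. The asymmetry between the two characterizations in Lemma~\ref{lem:universal-line} is the heart of the matter: the condition equivalent to $\gpg'(G)=2$ demands a \emph{single} vertex $u$ whose pairing with every other vertex yields a universal line, whereas the condition equivalent to $\gpg(G)=2$ merely demands that \emph{each} vertex be contained in \emph{some} pair inducing a universal line. The former is plainly stronger, so the implication should be one line.

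Concretely, I would proceed as follows. First, apply the second part of Lemma~\ref{lem:universal-line} to the hypothesis $\gpg'(G)=2$ to obtain a vertex $u\in V(G)$ such that $\{u,v\}$ induces a universal line for every $v\in V(G)\setminus\{u\}$. Then verify the condition of the first part of Lemma~\ref{lem:universal-line}: any $v\neq u$ already lies in the universal-line pair $\{u,v\}$, and $u$ itself lies in the pair $\{u,v\}$ for any choice of $v\in V(G)\setminus\{u\}$ (such a $v$ exists since $n(G)\ge 2$, as $\gpg'(G)$ is defined). Hence every vertex of $G$ is contained in a pair inducing a universal line, and the first part of Lemma~\ref{lem:universal-line} gives $\gpg(G)=2$.

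There is no real obstacle; the only thing to be checked is that the distinguished vertex $u$ supplied by the $\gpg'$-characterization works uniformly in the $\gpg$-characterization, which is immediate. As an alternative (slightly more concrete) route, one could instead appeal to Theorem~\ref{thm:gpg'=2} to conclude $G\in\mathcal{H}$ and then observe from the description of $\mathcal{H}$ that the central vertex $u$ has the property that $\{u,v\}$ induces a universal line for every other $v$, recovering the same conclusion; but the direct route via Lemma~\ref{lem:universal-line} is shorter and avoids reliance on the structural description.
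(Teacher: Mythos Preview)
Your argument is correct and is actually more direct than the paper's. The paper deduces the corollary from the structural classification in Theorem~\ref{thm:gpg'=2}: since $\gpg'(G)=2$ forces $G\in\mathcal H$, one checks (from the explicit description of $\mathcal H$) that every vertex of such a $G$ lies in some universal-line pair, and then applies the first part of Lemma~\ref{lem:universal-line}. You bypass this entirely by observing that the two characterisations in Lemma~\ref{lem:universal-line} are nested: the $\gpg'$-condition (a single vertex $u$ that forms a universal line with \emph{every} other vertex) trivially implies the $\gpg$-condition (each vertex lies in \emph{some} universal-line pair), since both $u$ and any $v\neq u$ are covered by the pair $\{u,v\}$. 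Your route is shorter and does not depend on the classification of $\mathcal H$; the paper's route, by contrast, makes the corollary a by-product of that classification and so ties the result to the concrete structure. You even anticipate the paper's approach as your alternative route.
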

	
	\section{Who goes first?}
	\label{Sec:player order}
	
	In the domination game, it is known that changing the order of the players Dominator and Staller can make a difference of at most one to the order of the resulting dominating set~\cite{bresar-2010, kinnersley-2013}. Moreover, from the bounds and exact values that we have derived so far, it may appear that the general position set produced by the B-game is no larger than that produced by the B'-game; in particular Corollary~\ref{cor:realisation for two} shows that this is true if either of the numbers $\gpg (G)$ or $\gpg '(G)$ equal two. However, it turns out that the order of the players in the general position game is very important, as $\gpg (G)-\gpg' (G)$ and $\gpg '(G)-\gpg(G)$ can both be arbitrarily large. This feature thus strongly distinguishes general position games from domination games. 
	
	We start with the simple result, which characterises the pairs $(a,b)$ with $a \leq b$ such that there exists a graph $G$ with $\gpg (G) = a$ and $\gpg'(G) = b$.
	
	\begin{proposition}\label{prop:small builder num}
		For any integers $a, b$ such that $2 \leq a \leq b$, there exists a graph $G$ with $\gpg (G) = a$ and $\gpg '(G) = b$.
	\end{proposition}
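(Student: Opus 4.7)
The plan is to exhibit an explicit graph from the family of complete multipartite graphs, using Theorem~\ref{the:buildblock multipartite} as a black box. Recall that for $t \geq 2$ and $r_1 \geq \cdots \geq r_t \geq 2$, the theorem gives $\gpg(K_{r_1,\dots,r_t}) = \min\{r_1,t\}$ and $\gpg'(K_{r_1,\dots,r_t}) = \max\{r_t,t\}$. The aim is to choose the parameters so that the minimum equals $a$ and the maximum equals $b$.

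The natural choice is to take all parts equal: set $t = a$ and $r_1 = \cdots = r_t = b$, giving the balanced complete multipartite graph $G = K_{b,b,\dots,b}$ with $a$ parts. The hypotheses of Theorem~\ref{the:buildblock multipartite} are satisfied since $a \geq 2$ provides $t \geq 2$, and $b \geq a \geq 2$ provides $r_t \geq 2$. Then by Theorem~\ref{the:buildblock multipartite},
\[
\gpg(G) = \min\{b,a\} = a \quad\text{and}\quad \gpg'(G) = \max\{b,a\} = b,
\]
where both equalities use the assumption $a \leq b$. This completes the proof.

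There is no real obstacle here: the bulk of the work has already been done in Theorem~\ref{the:buildblock multipartite}, and the only task is to select $t$ and the $r_i$ appropriately. The one small point worth checking is that the hypothesis $r_i \geq 2$ in the multipartite theorem is not violated, which is exactly where the assumption $a \geq 2$ enters through $b \geq a \geq 2$.
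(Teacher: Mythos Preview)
Your proof is correct and follows essentially the same approach as the paper: the paper also invokes Theorem~\ref{the:buildblock multipartite} on the complete $a$-partite graph with every part of size $b$. The only cosmetic difference is that the paper separates off the case $a=b$ and uses $K_a$ there, whereas your uniform construction already covers that case.
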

	\begin{proof}
		If $a = b$, then the complete graph suffices, so we can assume that $a < b$. Then Theorem~\ref{the:buildblock multipartite} shows that the complete $a$-partite graph with each part of size $b$ will suffice. 
	\end{proof}
	
	Now we show that the general position number for the B-game can be arbitrarily larger than the general position number for the B'-game.
	
	\begin{theorem}\label{thm:largerbuildernum}
		The difference $\gpg(G) -\gpg'(G)$ can be arbitrarily large.
	\end{theorem}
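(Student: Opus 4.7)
The plan is to construct, for each positive integer $k$, an explicit graph $G_k$ and show that $\gpg(G_k) - \gpg'(G_k) \geq k$. The design philosophy behind $G_k$ is to exploit a turn-order asymmetry: the graph should contain a distinguished ``lock'' vertex $s$ so that Blocker, by playing $s$ as her \emph{first} move, can close out the game within a few turns, yet this strategy collapses when Blocker is forced to move second against a well-chosen opening by Builder. Concretely, I intend to build $G_k$ as the union of a clique-like substructure $H$ of order roughly $k$ (or, more generally, a subgraph with a large general position set) with a small ``locking gadget'' centered on $s$ that provides, for every $v \in V(G_k) \setminus \{s\}$, a dedicated response vertex $w(v)$ with $\{s,v,w(v)\}$ maximal in general position.

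To prove $\gpg'(G_k) \leq 3$ (or a small constant), I would verify Blocker's strategy in the B'-game as follows. Blocker opens with $s$. For every legal Builder reply $v$, Blocker plays $w(v)$. The triple $\{s,v,w(v)\}$ is then a maximal general position set by construction, so the playable set at this point is empty. Applying Lemma~\ref{lem:useful} with $S = \{s,v,w(v)\}$ and $S' = \emptyset$ finishes the game after three moves, giving the bound.

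To prove $\gpg(G_k) \geq k+O(1)$, I would exhibit a Builder strategy in the B-game that always extends the game deep into $H$. Builder's opening move will be a carefully chosen vertex $u$ (most likely in $H$, possibly $u = s$ itself in order to deny Blocker her preemptive move) such that for every Blocker response $x$, the pair $\{u,x\}$ still admits an extension by a vertex $y \in V(H)$ with the property that $\{u,x,y\} \cup (V(H) \setminus \{u,y\})$ is in general position. An application of Lemma~\ref{lem:useful} would then drive the remainder of the game through $V(H)$, producing a final set of size at least $|V(H)|$.

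The main technical obstacle is establishing the turn-order asymmetry of the locking gadget: the same configuration $\{s,v,w(v)\}$ that is maximal when Blocker plays second in the B'-game must fail to be reachable, or fail to be maximal, when Blocker tries to deploy it in the B-game after Builder's opening. Overcoming this amounts to a case analysis showing that if Blocker plays $s$ as her \emph{second} move (in response to Builder's $u$), then the state $\{u,s\}$ with Builder to move admits a Builder extension into $H$ whose length dominates any blocking Blocker can subsequently arrange; and if Blocker instead plays some $v \neq s$, then $s$ either cannot be used later or is no longer the head of a maximal triple, so the game again extends deep into $H$. Verifying these cases carefully for an explicit gadget is the crux of the argument, and the bound $\gpg(G_k) - \gpg'(G_k) \geq k$ follows once both estimates are in hand.
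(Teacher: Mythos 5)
Your plan correctly identifies the mechanism the paper exploits -- a distinguished vertex (or small set of vertices) that lets Blocker shut the game down quickly when moving first, but which Builder can neutralise or occupy when moving first -- and your instinct that Builder may need to open on the lock vertex itself is exactly right. Indeed, the paper's remark following this theorem notes that the graph $G(r,1)$ (the clique-edge $xy$ shared by $r$ copies of $C_4$ and one triangle with apex $w_1$) has $\gpg(G(r,1)) = r+2$ and $\gpg'(G(r,1)) = 3$, which is precisely a realisation of your ``single lock vertex $s=w_1$'' scheme. However, as written your proposal is not a proof: you never exhibit the graph $G_k$, never define the gadget, and you explicitly defer the verification of the turn-order asymmetry as ``the crux of the argument'' and ``the main technical obstacle.'' For an existence statement of this kind, that verification \emph{is} the theorem; without a concrete construction and the accompanying case analysis there is nothing to check, and a referee could not reconstruct the argument from what you have given.

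For comparison, the paper's proof takes the graph $G(r,s)$ obtained by gluing $r$ copies of $C_4$ and $s$ triangles ($s$ odd, $3 \le s < r$) along a common edge $xy$, and the work consists of (i) a complete classification of the maximal general position sets of $G(r,s)$ into five types of orders $2$, $3$, $s+2$, $r+2$ and $r+s$, and (ii) a move-by-move analysis of both games against that classification, concluding $\gpg(G(r,s)) = r+2$ and $\gpg'(G(r,s)) = s+2$. Two features of that analysis are worth noting because your sketch does not anticipate them: Blocker's optimal opening in the B'-game is not a single universal lock vertex but any triangle apex $w_j$, and the argument that Blocker can then hold Builder to $s+2$ relies on a parity/mirroring strategy inside the set $W$ of apexes (this is why $s$ is taken odd). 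Conversely, in the B-game Builder must open at an apex $w_1$, since any other opening lets Blocker finish immediately with a two-element maximal set. If you want to complete your approach, the cleanest route is to carry out your plan for $G(r,1)$: classify its maximal general position sets (there are only the five types above with $s=1$) and verify the two strategies explicitly.
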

	\begin{proof}
		Consider the graph $G(r,s)$ formed by gluing together $r$ copies of $C_4$ and $s$ triangles along an edge. For $1 \leq i \leq r$, we will label the vertices of the $i$-th copy of $C_4$ by $x,y,x_i,y_i$, where $x \sim x_i$, $y \sim y_i$, $x_i \sim y_i$ and $x \sim y$ and $xy$ is the edge that is identified in the $r$ copies. Similarly, we will denote the unidentified vertex of the $j$-th triangle by $w_j$ for $1 \leq j \leq s$. Let $X = \{ x_i : 1 \leq i \leq r\} $, $Y = \{ y_i : 1 \leq i \leq r\} $ and $W = \{ w_j : 1 \leq j \leq s\} $. An example of this construction is shown in Fig.~\ref{fig:larger gpg'}. We will take $r$ to be larger than $s$ and assume $s$ to be odd and at least three.
		
		\begin{figure}[ht!]
			\centering
			\begin{tikzpicture}[x=0.2mm,y=-0.2mm,inner sep=0.2mm,scale=1.5,thick,vertex/.style={circle,draw,minimum size=10}]
				\node at (-70,-60) [vertex]  (x-1) {};
				\node at (-60,-30) [vertex]  (x0) {};
				\node at (-50,0) [vertex]  (x1) {};
				\node at (-40,30) [vertex]  (x2) {};
				\node at (-30,60) [vertex]  (x3) {};
				\node at (-20,90) [vertex]  (x4) {};
				
				\node at (70,-60) [vertex]  (y-1) {};
				\node at (60,-30) [vertex]  (y0) {};
				\node at (50,0) [vertex]  (y1) {};
				\node at (40,30) [vertex]  (y2) {};
				\node at (30,60) [vertex]  (y3) {};
				\node at (20,90) [vertex]  (y4) {};
				
				\node at (-120,120) [vertex]  (x) {};
				\node at (120,120) [vertex]  (y) {};
				
				\node at (-60,170) [vertex]  (s1) {};
				\node at (-30,170) [vertex]  (s2) {};
				\node at (0,170) [vertex]  (s3) {};
				\node at (30,170) [vertex]  (s4) {};
				\node at (60,170) [vertex]  (s5) {};
				\path
				(x0) edge (y0)
				(x-1) edge (y-1)
				(x0) edge (x)
				(x-1) edge (x)
				(y0) edge (y)
				(y-1) edge (y)

				(x) edge (s1)
				(x) edge (s2)
				(x) edge (s3)
				(x) edge (s4)
				(x) edge (s5)
				
				(y) edge (s1)
				(y) edge (s2)
				(y) edge (s3)
				(y) edge (s4)
				(y) edge (s5)
				
				(x) edge (y)
				
				(x) edge (x1)
				(x) edge (x2)
				(x) edge (x3)
				(x) edge (x4)
				
				(y) edge (y1)
				(y) edge (y2)
				(y) edge (y3)
				(y) edge (y4)
				
				(x1) edge (y1)
				(x2) edge (y2)
				(x3) edge (y3)
				(x4) edge (y4)
				
				;
				\node [above=2.0mm] at (x-1) {$x_6$};
				\node [above=2.0mm] at (x0) {$x_5$};
				\node [above=2.0mm] at (x1) {$x_4$};
				\node [above=2.0mm] at (x2) {$x_3$};
				\node [above=2.0mm] at (x3) {$x_2$};
				\node [above=2.0mm] at (x4) {$x_1$};
				\node [above=2.0mm] at (y-1) {$y_6$};
				\node [above=2.0mm] at (y0) {$y_5$};
				\node [above=2.0mm] at (y1) {$y_4$};
				\node [above=2.0mm] at (y2) {$y_3$};
				\node [above=2.0mm] at (y3) {$y_2$};
				\node [above=2.0mm] at (y4) {$y_1$};
				\node [below=2.0mm] at (s1) {$w_1$};
				\node [below=2.0mm] at (s2) {$w_2$};
				\node [below=2.0mm] at (s3) {$w_3$};
				\node [below=2.0mm] at (s4) {$w_4$};
				\node [below=2.0mm] at (s5) {$w_5$};
				\node [left=2.0mm] at (x) {$x$};
				\node [right=2.0mm] at (y) {$y$};
				
			\end{tikzpicture}
			\caption{The graph $G(6,5)$}\label{fig:larger gpg'}
		\end{figure}
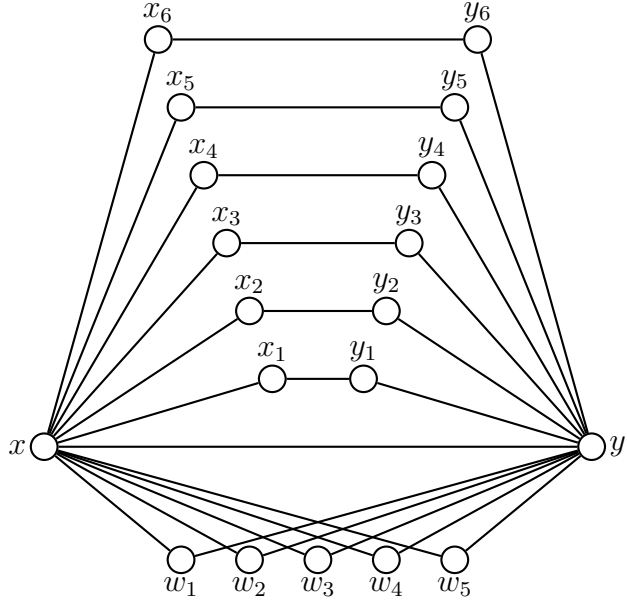

		We begin by classifying the maximal general position sets in $G(r,s)$. Let $S$ be such a maximal general position set. If $x,y \in S$, then no $x_i$ and no $y_i$ can be in $S$ and $S$ can contain at most one $w_j$; therefore $S$ induces a triangle on vertices $x,y,w_j$. Assume that $x \in S, y \notin S$ (the case $y\in S$, $x \notin S$, is obviously symmetrical). The sets $\{ x,x_i\} $ are maximal general position sets, so we can assume that $S \cap X = \emptyset $; then, noting that $S$ can contain at most one vertex of $W$, we must have $S = \{ x,w_j\} \cup Y$ for some $w_j$. Finally, we can assume that $x,y \not \in S$. Suppose that for some $1 \leq i\leq r$ we have $x_i,y_i \in S$; then $S$ cannot contain any further vertices from $X \cup Y$ and we must have $S = \{ x_i,y_i\} \cup W$. Otherwise, $S$ consists of one vertex from each set $\{ x_i,y_i\} $ along with the vertices of $W$.
		
		We conclude that each maximal general position set in $G(r,s)$ has one of the following types: 
		\begin{align*}
			A: & \quad \{ x,x_i\}\ {\rm or}\ \{ y,y_i\}; \\
			B: & \quad \{ x,y,w_j\} ; \\
			C: & \quad \{ x_i,y_i\} \cup W; \\
			D: & \quad \{ x,w_j\} \cup Y\ {\rm or}\ \{ y,w_j\} \cup X; \\
			E: & \quad S\ {\rm consists\ of}\ W\ {\rm together\ with\ one\ vertex\ from\ each\ set}\ \{ x_i,y_i\}. 
		\end{align*}
		These sets have orders $A$: 2; $B$: 3; $C$: $s+2$; $D$: $r+2$; and $E$:  $r+s$.  As $r > s \geq 3$, the desirability of these sets to Builder is given by $E>D>C>B>A$.
		
		Let us first examine the B-game on $G(r,s)$. If Builder's first move is in $\{ x,y\} \cup X \cup Y$, then Blocker can finish the game on their first move by completing the set to a maximal general position set of Type $A$. Therefore we can assume that Builder starts at $w_1$. Blocker has a strategy to restrict Builder to a set of order at most $r+2$ by now choosing $x$ as their first move, forcing a set of Type $D$ or $B$. We show that Builder can guarantee a set of order at least $r+2$. If Blocker's first move is in $\{ x,y\} $ (again, assume this to be $x$), then Builder can reply with $y_1$ to force a set of Type $D$, whereas if Blocker's first move is in $X \cup Y$, say $x_1$, then Builder can reply with $y_2$ to force a set of Type $E$. Suppose now that Blocker takes their first move in $W$; now the outcome is restricted to a set of Type either $E$ or $C$. Now as $s$ is odd, Builder can force Blocker to move in $X \cup Y$ by replying to each move of Blocker in $W$ by choosing another vertex of $W$; by symmetry, we can suppose that Blocker eventually chooses the vertex $x_1$, at which point Builder can reply with $y_2$, forcing a set of Type $E$. It follows that if both players use optimal strategies, the outcome will be a set of Type $D$ with $r+2$ vertices.
		
		Now we examine the B'-game. If Blocker's first move is in $\{ x,y\} $, say $x$, then the best Builder can hope for is a set of Type $D$, which they can force by choosing $y_1$. If Blocker's first move is in $X \cup Y$, say at $x_1$, then Builder could force a set of Type $E$ by replying with $y_2$.

		We now show that if Blocker starts at a vertex of $W$, then they have a strategy to limit Builder to a set of order $s+2$, which Builder can achieve. Suppose then that Blocker chooses vertex $w_1$ as their first move. It would be a bad choice for Builder to take either $x$ or $y$ as their first move, since Blocker could reply with the remaining vertex of $\{ x,y\} $ and limit Builder to a set of Type $B$. However, Builder can achieve at least a set of Type $C$ by taking vertex $x_1$ as their first move. It can be seen that Blocker can limit Builder to a set with at most $s+2$ vertices as follows. As $s$ is odd, if Blocker replies to any move of Builder in $W$ by selecting another vertex of $W$, then Builder is forced to take a move in $\{ x,y\} \cup X \cup Y$; a move in $\{ x,y\} $ is either impossible for Builder (if at least two vertices of $W$ have already been selected) or unwise as a first move for Builder, as already discussed, so we can assume that Builder's first move outside of $W$ is $x_1$. Blocker replies to Builder's move $x_1$ with $y_2$, which forces a set of Type $C$ with $s+2$ vertices. Therefore the best possible strategy for Blocker gives a set of order $s+2$.
		
		It follows that $\gpg (G(r,s)) = r+2$ and $\gpg '(G(r,s)) = s+2$. As $r$ can be arbitrarily larger than $s$, this proves the result.
	\end{proof}
	
	We remark that if we allow $s = 1$ then a simple argument along the lines of the above proof shows that the graph $G(r,1)$ has $\gpg (G(r,1)) = r+2$ and $\gpg '(G(r,1)) = 3$. 
	
	Notice that for all of the graphs constructed in Theorem~\ref{thm:largerbuildernum} the number $\gpg '(G)$ is odd. This raises the question: for which pairs $(a,b)$ is there a graph with $\gpg (G) = a$ and $\gpg '(G) = b$? If there does exist such a graph, we will say that the pair $(a,b)$ is \emph{realisable}. By Proposition~\ref{prop:small builder num} we know that $(a,a+r)$ is realisable for $a \geq 2$ and $r \geq 0$. Trivially $(1,1)$ is realisable, but for $a \geq 2$ the pairs $(1,a)$ and $(a,1)$ are not realisable. By Corollary~\ref{cor:realisation for two} a pair $(a,2)$ is realisable if and only if $a = 2$. Finally, Theorem~\ref{thm:largerbuildernum} shows that any pair $(a+r,a)$ with $a \geq 3$ and $r \geq 0$ is realisable. It remains only to settle the realisability of pairs $(a,b)$ with $a > b$ and $b$ even. We now show that some such pairs are realisable.
	
	\begin{theorem}
		The pairs $(j+k+1,j+2)$ are realisable for $j \geq k \geq 1$.
	\end{theorem}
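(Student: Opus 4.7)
The plan is to construct, for each pair $(j+k+1,j+2)$ with $j\ge k\ge 1$, a graph realising these two values. The case $k=1$ reduces to $(j+2,j+2)$, for which the complete graph $K_{j+2}$ trivially works; accordingly I may assume $k\ge 2$ throughout.

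For $k\ge 2$ and $j$ odd, the construction $G(j+k-1,j)$ from Theorem~\ref{thm:largerbuildernum} already suffices: the parameter $s=j$ is odd and $r=j+k-1$ strictly exceeds $s$, so the analysis of Theorem~\ref{thm:largerbuildernum} yields $\gpg=r+2=j+k+1$ and $\gpg'=s+2=j+2$. For $k\ge 2$ and $j$ even, however, the same graph does not realise the desired pair: the parity argument underlying Theorem~\ref{thm:largerbuildernum} reverses, and a direct recomputation shows that $G(j+k-1,j)$ instead realises $(j+2,j+k+1)$, exactly the swap of what is wanted. To handle the even case I would attach a small parity-shifting gadget to $G(j+k-1,j)$ -- a natural candidate being an extra pendant vertex at a judiciously chosen point, or an additional small subgraph sharing just a single vertex with the original graph -- chosen so as to restore Builder's parity advantage within the set $W$ of triangle apex vertices while leaving the remaining structure of the maximal general position sets essentially unchanged.

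The main obstacle is to engineer the gadget so that its effect on the two games is the required asymmetric one. It must push the B-game outcome back up from Type $C$ of size $s+2$ to Type $D$ of size $r+2=j+k+1$, and must keep the B'-game outcome at Type $C$ of size $s+2=j+2$, without introducing new short maximal general position sets that either player could exploit as a cheap first move. Verifying these two constraints will require reclassifying the maximal general position sets of the modified graph along the lines of Types $A$--$E$, and running through a case analysis of each potential first move for both players in the spirit of the argument of Theorem~\ref{thm:largerbuildernum}.
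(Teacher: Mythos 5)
Your reduction of the case $k=1$ to the complete graph and your observation that for odd $j\ge 3$ the graph $G(j+k-1,j)$ of Theorem~\ref{thm:largerbuildernum} already realises $(j+k+1,j+2)$ are both correct. However, the case of even $j$ --- half of all instances of the theorem --- is not proved: you describe a plan to attach a ``parity-shifting gadget'' but never specify the gadget, never reclassify the maximal general position sets of the modified graph, and never carry out the game analysis that you yourself identify as necessary. That is precisely the hard part of the statement, and it is not a routine verification: attaching a pendant vertex or a small subgraph to $G(r,s)$ typically creates new, very small maximal general position sets that Blocker can exploit as a cheap reply, and it can destroy the maximality of the Type $C$, $D$ and $E$ sets on which the whole analysis rests, so it is not clear that a gadget with the required asymmetric effect even exists within this family. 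In addition, your assertion that for even $j$ the unmodified graph realises the swapped pair $(j+2,j+k+1)$ is given without the ``direct recomputation'' it appeals to, and is itself doubtful, since losing the parity argument for one player does not automatically hand the opposite outcome to the other.

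The paper sidesteps the parity issue entirely by using a different, uniform construction $H(j,k)$: a clique of order $2(j+k)$ partitioned into sets $J_1,J_2,K_1,K_2$ with $|J_i|=j$ and $|K_i|=k$, together with three additional vertices $z,x_1,x_2$, where $x_i$ is joined to $J_i\cup K_i$ and $z$ to $J_1\cup J_2$; a case analysis of the possible first moves then yields $\gpg(H(j,k))=1+j+k$ and $\gpg'(H(j,k))=j+2$ for all $j\ge k\ge 1$, with no dependence on parity. To complete your argument you would need either to produce and fully analyse a concrete gadget for even $j$, or to switch to a construction of this kind.
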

	\begin{proof}
		We define the graph $H(j,k)$ as follows, where $j \geq k \geq 1$. Take a clique of order $2(j+k)$ and partition it into four sets, $J_1$, $J_2$, $K_1$ and $K_2$, where $|J_1| = |J_2| = j$ and $|K_1| = |K_2| = k$. Add three new vertices $z,x_1$ and $x_2$. Join $x_i$ to all vertices of $J_i \cup K_i$ for $i = 1,2$ and finally join $z$ to every vertex of $J_1 \cup J_2$.
		
		Consider the B'-game on $H(j,k)$. Blocker has a strategy to limit Builder to a set of at most $j+2$ vertices. Blocker chooses $z$ as their first move. Now if Builder takes their next move in $K_1$, Blocker replies with $x_1$, and likewise if Builder's first move is in $K_2$, Blocker replies with $x_2$. If Builder's first move is in $J_1$, Blocker takes $x_2$ as their second move and if Builder moves in $J_2$, Blocker chooses $x_1$. Finally, if Builder chooses $x_1$ or $x_2$, Blocker can end the game by choosing the remaining vertex in $\{ x_1,x_2\} $. Conversely, Builder can guarantee a set of order at least $j+2$ by taking their first move in $J_1 \cup J_2$.  
		
		Now we deal with the B-game. When Builder goes first, they have a strategy to construct a set of order at least $1+j+k$. Builder chooses a vertex of $J_1$ as their first move. If Blocker moves in $\{ z\} \cup J_1$, Builder replies with a vertex in $J_2$, which, since the smallest maximal general position set containing $J_1 \cup J_2$ is $\{ z\} \cup J_1 \cup J_2$, guarantees a set of order at least $2j+1$. If Blocker moves in $J_2$, Builder replies with $z$. If Blocker moves in $K_1 \cup K_2$, Builder can guarantee that the whole clique of order $2(j+k)$ is selected by taking their second move in $J_2$. Finally, if Blocker chooses their first vertex in $\{ x_1,x_2\} $, Builder replies with a vertex of $K_1$ to guarantee at least $1+j+k$ vertices in the resulting set. However, Blocker can limit Builder to at most $1+j+k$ vertices, since if Builder moves in $J_i \cup K_i$ for $i = 1,2$, Blocker can reply with $x_i$, whilst if Builder chooses an $x_i$, $i = 1,2$, for their first move, then Blocker can choose a vertex of $J_i$ for their first move and, finally, if Builder selects $z$ for their first move, then Blocker can restrict Builder to a set of most $j+2$ vertices by replying with $x_1$. 
		
		It follows that $\gpg (H(j,k)) = 1+j+k$ and $\gpg '(H(j,k)) = j+2$. 
	\end{proof}
	
	We conclude this section with the following open problem.
	
	\begin{problem}\label{prob:realisable pairs}
		Is every pair $(a,b)$ with $a > b > 2$ and even $b$ realisable?
	\end{problem}
	
	\section{The B'-game played on trees}\label{sec:trees}
	
	In this section we have a closer look to the B'-game played on trees. (There is no need to consider the B-game due to Corollary~\ref{cor:bipartite}.) We first prove the following upper bound and later, in Theorem~\ref{thm:trees-characterization} characterise the trees that attain the bound. 
	
	\begin{theorem}
		\label{thm:trees-upper bound}
		If $T$ is a tree, then $\gpg'(T)\le \ell(T) - \Delta(T) + 2$. 
	\end{theorem}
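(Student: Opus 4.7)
My plan is to give Blocker an explicit strategy whose first move is a vertex $v$ of maximum degree $\Delta=\Delta(T)$, and then show that no matter how play continues, the resulting general position set $S$ satisfies $|S|\le\ell(T)-\Delta+2$. Write $T_1,\ldots,T_\Delta$ for the components of $T-v$, and let $v_i$ be the neighbour of $v$ lying in $T_i$ (so $T_i$ can be viewed as rooted at $v_i$); let $\ell_i$ be the number of leaves of $T$ contained in $T_i$. Since $v\in S$, the general position property applied at $v$ immediately gives that $S\setminus\{v\}$ lies entirely in one component $T_i$: any two elements in different components would have $v$ on their unique connecting path.

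The crux is the bound $|S\cap T_i|\le\ell_i$. Two cases arise. If $v_i\in S$, then applying the general position condition at $v_i$ forces $S\cap T_i=\{v_i\}$, since any further element of $T_i\setminus\{v_i\}$ would lie in a component of $T-v_i$ distinct from the one containing $v$. Otherwise $v_i\notin S$ and $S\cap T_i$ is an antichain in the rooted tree $T_i$, because if $u_1$ were a proper ancestor of $u_2$ then $u_1$ would lie on the path from $v$ to $u_2$, contradicting $\{v,u_1,u_2\}\subseteq S$. I then assign to each $u\in S\cap T_i$ the set $L(u)$ of leaves of $T$ lying in the subtree of rooted $T_i$ based at $u$. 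Each $L(u)$ is non-empty, because descending from $u$ reaches a leaf of $T_i$ distinct from $v_i$, and any leaf of $T_i$ other than $v_i$ has its unique $T_i$-neighbour as its only neighbour in $T$, hence is a leaf of $T$. The antichain property makes the sets $L(u)$ pairwise disjoint, so $|S\cap T_i|\le\ell_i$.

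To conclude, I observe that every component $T_j$ contains at least one leaf of $T$: a singleton component is itself a leaf of $T$, while a component with at least two vertices has at least two leaves of $T_j$, at least one of which is distinct from $v_j$ and hence a leaf of $T$. Therefore $\sum_{j\ne i}\ell_j\ge\Delta-1$, giving $\ell_i\le\ell(T)-\Delta+1$ and
\[
|S|=1+|S\cap T_i|\le\ell(T)-\Delta+2,
\]
as required. I expect the main obstacle to be the verification of the antichain structure and the accompanying leaf-assignment argument in the second paragraph; once $|S\cap T_i|\le\ell_i$ is secured, the remaining step is a simple count of leaves across the components of $T-v$.
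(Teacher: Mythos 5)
Your proof is correct and follows essentially the same strategy as the paper's: Blocker opens at a maximum-degree vertex, the remaining play is confined to a single component of $T-v$, and a leaf count over the other $\Delta-1$ components gives the bound. The only difference is that where the paper invokes the known fact that $\gp$ of a tree equals its number of leaves (applied to the branch together with $v$), you reprove the needed instance directly via the antichain/leaf-assignment argument, which is a valid self-contained substitute.
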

	
	\begin{proof}
		Consider the B'-game played on $T$. We need to show that Blocker has a strategy which guarantees that no matter how Builder plays, no more than $\ell(T) - \Delta(T) + 2$ vertices will be selected during the game. The strategy of Blocker is to start the game on a vertex $b_1'$, where $\deg_T(b_1') = \Delta(T)$. Let $T_i$, $i\in [\Delta(T)]$, be the components of $T-b_1'$ and assume without loss of generality that the first move $b_1$ of Builder is in $T_1$. Then all the remaining moves of both players in the rest of the game lie in $T_1$. Indeed, if some later move, say $u$, lies in $T_i$, $i > 1$, then $b_1'$ lies in the $u,b_1$-geodesic. 
		
		Now let $T_1'$ be the subtree of $T$ induced by $V(T_1) \cup \{b_1'\}$. Clearly, each of the subtrees $T_i$, $i\ge 2$, contains at least one leaf of $T$. Note also that $b_1'$ is a leaf of $T_1'$ but it is not a leaf of $T$. It follows that $\ell(T_1') \le \ell(T) - (\Delta(T) - 1) + 1$ (where ``$+1$" comes from the leaf $b_1'$ of $T_1'$). Since our game is restricted to the vertices of $T_1'$, we thus have
		$$\gpg'(T) \le \gp(T_1') = \ell(T_1') \le \ell(T) - (\Delta(T) - 1) + 1\,,$$
		where we have used the fact that for any tree the general position number equals the number of its leaves~\cite[Corollary 3.7]{Manuel-2018}. 
	\end{proof}
	
	To characterise the graphs that attain the equality in Theorem~\ref{thm:trees-upper bound}, consider the following trees. If $k\ge 2$, and $t_i$, $i\in [k]$, are non-negative integers, then let $T_{t_1,\dots,t_k}$ denote the tree obtained from the path of order $k$, to be called the {\em central path} of $T_{t_1,\dots,t_k}$, by respectively attaching $t_1,\dots,t_k$ leaves to its consecutive vertices. In particular, $T_{0,\dots,0} \cong P_k$ and $T_{1,0,\dots,0,1} \cong P_{k+2}$. 
	
	\begin{proposition}
		\label{prop:gpg-Tk}
		If $k\geq 2$, $t_1\ge 1$, and $t_k\ge 1$, then  $$\gpg'(T_{t_1,\dots,t_k})=\min\limits_{i\in [k]}\max\{\sum\limits_{j=1}^{i-1}t_j,\sum\limits_{j=i+1}^{k}t_j\}+1.$$
	\end{proposition}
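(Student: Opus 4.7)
Write $S_i^-=\sum_{j<i}t_j$, $S_i^+=\sum_{j>i}t_j$, and set $B=\min_{i\in[k]}\max\{S_i^-,S_i^+\}+1$, the value claimed by the proposition. My plan is to prove $\gpg'(T_{t_1,\ldots,t_k})=B$ by exhibiting matching strategies for the two players that each, at the decisive moment, reduce the rest of the game to a single application of Lemma~\ref{lem:useful}. The driving observation is that every spine vertex $v_i$ of $T_{t_1,\ldots,t_k}$ is a cut vertex, so as soon as it is chosen the remainder of the game is confined to the component of $T_{t_1,\ldots,t_k}-v_i$ containing the next move (together with $v_i$).

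For the upper bound $\gpg'\le B$, Blocker opens with the spine vertex $v_{i^*}$ attaining the minimum in the formula. Builder's reply falls into three classes: a leaf of $v_{i^*}$ (the game ends at size $2$), the left-side subtree, or the right-side subtree. In the latter two cases every subsequent move is trapped in that side together with $v_{i^*}$, and since the general position number of a tree equals its number of leaves (Corollary~3.7 of~\cite{Manuel-2018}), the game can collect at most $1+S_{i^*}^-$ or $1+S_{i^*}^+$ vertices. Maximising over Builder's choice gives the bound $1+\max\{S_{i^*}^-,S_{i^*}^+\}=B$.

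For the lower bound $\gpg'\ge B$, I describe a Builder strategy by cases on Blocker's first move. If Blocker opens on the spine with $v_i$, Builder replies with a leaf of $v_k$ when $S_i^+\ge S_i^-$ and otherwise with a leaf of $v_1$; a direct verification shows that the current state $S$ and the set $S'$ of playable vertices satisfy $S\cup S'=\{v_i\}\cup L_{i+1}\cup\cdots\cup L_k$ (or the symmetric set), which is in general position, so Lemma~\ref{lem:useful} closes the game at exactly $1+\max\{S_i^-,S_i^+\}\ge B$ vertices. If Blocker opens with a leaf $\ell\in L_j$ and $j\in\{1,k\}$, Builder plays at the opposite end of the spine; the geodesic between these two leaves already covers every spine vertex, so no spine vertex is ever playable again and the game fills all $T=\sum_m t_m$ leaves, with $T\ge B$ following from $t_1\ge 1$ and $S_1^+\ge B-1$. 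For $1<j<k$, Builder plays a leaf of $v_1$ if $S_{j+1}^-\ge S_{j-1}^+$ and of $v_k$ otherwise; one then splits on Blocker's third move. If Blocker plays a spine vertex, his optimum is $v_{j+1}$ (resp.\ $v_{j-1}$) and Lemma~\ref{lem:useful} applies to close the game at size $1+S_{j+1}^-$ (resp.\ $1+S_{j-1}^+$). If Blocker plays a leaf $\ell''$, Builder replies on move $4$ with $v_k$ (resp.\ $v_1$): this move is legal unless $\ell''$ happens to lie in the opposite end class $L_k$ (resp.\ $L_1$), and when legal it makes $S\cup S'=\{v_k\}\cup L_1\cup\cdots\cup L_{k-1}$ a general position set, so Lemma~\ref{lem:useful} closes the game at size $1+S_k^-\ge B$. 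In the exceptional case $\ell''\in L_k$, the geodesic between $\ell''$ and Builder's leaf already covers the spine, so the game fills all leaves and yields size $T\ge B$.

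The arithmetic glue of the lower bound is the identity $\max\{S_{j-1}^+,S_{j+1}^-\}=t_j+\max\{S_j^-,S_j^+\}\ge\max\{S_j^-,S_j^+\}\ge B-1$, together with the easy bounds $1+S_k^-,\,1+S_1^+\ge B$ and $T\ge B$. The main obstacle I anticipate is the bookkeeping in the leaf-opening subcase $1<j<k$: after Builder's second move the union $S\cup S'$ is not yet a general position set, since the unplayed stretch of spine vertices on one side of $v_j$ forms three-in-line configurations with $\ell$, so Lemma~\ref{lem:useful} is not available immediately; Builder must plan one move ahead so that, whatever Blocker does on turn~$3$, either the Lemma applies at once or Builder can restore its hypothesis on turn~$4$ by playing the far spine endpoint.
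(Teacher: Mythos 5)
Your proof is correct and follows essentially the same route as the paper: Blocker opens at the spine vertex minimising $\max\{S_i^-,S_i^+\}$ and the cut-vertex confinement argument gives the upper bound, while Builder's end-leaf reply combined with Lemma~\ref{lem:useful} gives the lower bound. If anything, your lower bound is more complete than the paper's, whose explicit argument only analyses Builder's response to Blocker opening on the central path, leaving the interior-leaf openings (your look-ahead subcase $1<j<k$) essentially to the reader.
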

	
	\begin{proof}
		Let $k\geq 2$, and set $T = T_{t_1,\dots,t_k}$ for the rest of the proof. Let $P$ be the central path of $T$ and let $u_1, \ldots, u_k$ be its consecutive vertices. Consider the B'-game and let Blocker start by the move $x$. Based on the position of $x$ in $T$, we consider the following cases.
		
		\medskip\noindent
		{\bf Case 1.} $x=u_i$, $i\in[k]$.\\
		If Builder replies with a vertex adjacent to $u_i$, then the game is over. Hence we consider the following two subcases. 
		
		Assume first that Builder starts with the move $u_s$, where $u_s$ is not adjacent to $u_i$. In this subcase, every vertex played in the rest of the game is a leaf adjacent to an internal vertex of the $u_i,u_s$-path. Moreover, by Lemma~\ref{lem:useful} all these vertices will be played. It follows that if $s \le i-2$, the game lasts $2+\sum_{j=s+1}^{i-1}t_j$ moves. Since $t_1\ge 1$, the game thus lasts at most $1+\sum_{j=1}^{i-1}t_j$ moves. Similarly, if $s \ge i+2$, then the game lasts $2+\sum_{j=i+1}^{s-1}t_j$ moves and because $t_k\ge 1$, the game thus lasts at most $1+\sum_{j=i+1}^{k}t_j$ moves.
		
		Assume second that Builder starts with the move $y$, where $y$ is a leaf adjacent to $u_s$, where $s\ne i$. Then each of the moves in the rest of the game is not an internal vertex of the $x,y$-path in $T$. If $s<i$, then each vertex in $N[u_\ell]$, where $\ell \ge i$, is on a common shortest path with $x$ and $y$. It follows that at most $\sum_{j=1}^{i-1}t_j+1$ moves will be played by the end of the game when $s<i$. Similarly, if $i<s$, then at most $\sum_{j=i+1}^{k}t_j+1$ moves will be played.
		
		\medskip\noindent
		{\bf Case 2.} $x$ is a leaf attached to $u_i$, $i\in[k]$. \\
		In this case, Builder will not reply with the vertex $u_i$, because in that case then game would be over. 
		So we consider the following subcases.
		
		Assume first that Builder starts with the move $u_{s}$, where $s\neq i$. In this subcase, the moves in the rest of the game can not lie on the $u_i,u_s$-path in $T$.  Assume that $s<i$. Then in the rest of the game at most one vertex from the $u_{s+1},u_k$-path will be played. Moreover, if such a vertex will be played, then the leaves attached to $u_k$ will not be played, and we have assumed that there is at least one such leaf. It follows that the game lasts at most  $\sum_{j=s+1}^{k}t_j+1$ moves. By a parallel argument, the game lasts at most $\sum_{j=1}^{s-1}t_j+1$ moves if $s>i$.
		
		Assume second that Builder starts by selecting a leaf attached to $u_s$, where $s\in [k]$. In this subcase, by Lemma~\ref{lem:useful}, all the leaves of $T$ can  be played in the rest of the game. Hence  at most $\sum_{j=1}^{k}t_j$ moves will be selected in the game.
		
		We have proved by now that the B'-game will last at most 
		$$\gpg'(T)=\min\limits_{i\in [k]}\max\{\sum\limits_{j=1}^{i-1}t_j,\sum\limits_{j=i+1}^{k}t_j\}+1$$
		moves. 
		
		Now let $i\in [k]$ be selected such that $\max\{\sum\limits_{j=1}^{i-1}t_j,\sum\limits_{j=i+1}^{k}t_j\} $ is minimised over all $i$. Let Blocker start the game by playing $u_i$. If $\sum\limits_{j=1}^{i-1}t_j \ge \sum\limits_{j=i+1}^{k}t_j$, then Builder replies by playing an arbitrary leaf attached to $u_1$. By Lemma~\ref{lem:useful}, by the end of the game exactly all the leaves attached to $u_1, \ldots, u_{i-1}$ will be played, so that the game will last at least  $\sum\limits_{j=1}^{i-1}t_j + 1$ moves. Similarly, if $\sum\limits_{j=1}^{i-1}t_j < \sum\limits_{j=i+1}^{k}t_j$, then Builder replies by playing an arbitrary leaf attached to $u_k$ and then Lemma~\ref{lem:useful} implies that the game will last at least  $\sum\limits_{j=i+1}^{k}t_j + 1$ moves. We conclude that the game lasts at least $\max\{\sum\limits_{j=1}^{i-1}t_j,\sum\limits_{j=i+1}^{k}t_j\}+1$ moves.  
	\end{proof}
	
	In view of Proposition~\ref{prop:gpg-Tk} we now set
	$${\cal T}' = \{T_{t_1,\ldots, t_k}:\ k\geq 2,\ t_1\ge 1,\ t_k\geq \sum_{i=1}^{k-1}t_i\}\,$$
	and state the following: 
	
	\begin{proposition}
		\label{prop:equality-for-T'}
		If $T\in {\cal T}'$, then $\gpg'(T) = \ell(T) - \Delta(T) + 2$.
	\end{proposition}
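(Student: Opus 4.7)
The plan is to prove Proposition~\ref{prop:equality-for-T'} by directly invoking the closed formula of Proposition~\ref{prop:gpg-Tk} and simplifying it under the hypothesis $t_k\ge \sum_{j=1}^{k-1}t_j$. So there are really only three things to compute: $\ell(T)$, $\Delta(T)$, and the minimum in the formula of Proposition~\ref{prop:gpg-Tk}; everything should then collapse to the same quantity.

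First, I would identify the leaves and the maximum degree of $T=T_{t_1,\dots,t_k}$. Since $t_1\ge 1$, the vertex $u_1$ carries at least one pendant and is therefore not a leaf of $T$; and since $t_k\ge \sum_{j=1}^{k-1}t_j\ge t_1\ge 1$, neither is $u_k$. Hence the leaves of $T$ are precisely the $\sum_{j=1}^{k}t_j$ attached pendants, i.e.\ $\ell(T)=\sum_{j=1}^{k}t_j$. For the maximum degree, $\deg_T(u_1)=t_1+1$, $\deg_T(u_k)=t_k+1$, and $\deg_T(u_i)=t_i+2$ for $2\le i\le k-1$. The hypothesis gives $t_k\ge t_1+\sum_{j=2}^{k-1}t_j\ge t_i+1$ for every internal $i$ (using $t_1\ge 1$), and $t_k\ge t_1$, so $\Delta(T)=t_k+1$. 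Therefore
\[
\ell(T)-\Delta(T)+2=\sum_{j=1}^{k}t_j-(t_k+1)+2=\sum_{j=1}^{k-1}t_j+1.
\]

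It remains to show that the right-hand side equals $\gpg'(T)$. By Proposition~\ref{prop:gpg-Tk},
\[
\gpg'(T)=\min_{i\in[k]}\max\!\left\{\sum_{j=1}^{i-1}t_j,\ \sum_{j=i+1}^{k}t_j\right\}+1.
\]
Taking $i=k$ yields the value $\sum_{j=1}^{k-1}t_j+1$, which gives the upper bound. For the matching lower bound, note that for every $i<k$ the sum $\sum_{j=i+1}^{k}t_j$ contains the term $t_k$, and by assumption $t_k\ge \sum_{j=1}^{k-1}t_j\ge \sum_{j=1}^{i-1}t_j$. Hence for $i<k$ the max is at least $t_k\ge \sum_{j=1}^{k-1}t_j$, and for $i=k$ it equals $\sum_{j=1}^{k-1}t_j$. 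The minimum over $i$ is therefore exactly $\sum_{j=1}^{k-1}t_j$, and adding $1$ gives $\gpg'(T)=\sum_{j=1}^{k-1}t_j+1=\ell(T)-\Delta(T)+2$.

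There is no real obstacle: the only subtlety is confirming that the hypothesis $t_k\ge \sum_{j=1}^{k-1}t_j$ together with $t_1\ge 1$ is strong enough to force $\Delta(T)=t_k+1$ even when some internal $t_i$ is large, which is exactly where the term $t_1\ge 1$ is used to go from $t_k\ge t_i$ to $t_k\ge t_i+1$. Once this has been observed the proof is a direct substitution into Proposition~\ref{prop:gpg-Tk}.
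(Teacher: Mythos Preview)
Your proof is correct and follows essentially the same route as the paper's: both reduce the claim to the formula of Proposition~\ref{prop:gpg-Tk} and evaluate the minimum at $i=k$ using the hypothesis $t_k\ge\sum_{j=1}^{k-1}t_j$. Your version is in fact cleaner, since the paper splits into several subcases (paths, $t_1=1$, all interior $t_i=0$, etc.)\ and handles some of them by ad hoc game arguments, whereas you treat all cases uniformly; the only place the extra care pays off is exactly the observation you flag, that $t_1\ge1$ is needed to get $t_k\ge t_i+1$ for interior $i$ and hence $\Delta(T)=t_k+1$.
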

	
	\begin{proof}
		Let $u_1,\ldots, u_k$ be the consecutive vertices of the $P_k$, to which $t_1,\ldots,t_k$ leaves are  respectively attached. By the definition of ${\cal T}'$ we have $t_1\geq 1$ and $t_k\geq 1$. 
		
		Assume first that $t_2=\cdots=t_{k-1}=0$. If $t_1=t_k=1$, then $T$ is a path and $\gpg'(T)=2$ holds. If $t_1=1$ and $t_k\geq 2$, then after Blocker first plays $u_k$, the game is over after an arbitrary first move of Builder. Hence $\gpg'(T) = 2$ in this case. Assume next that $t_1\geq 2$. Then $\deg_T(u_1)=t_1+1\ge 3$ and $\deg_T(u_k)= t_k+1\ge 3$. Since $\ell(T)=t_1+t_k$ and $t_k\geq t_1$, by Proposition~\ref{prop:gpg-Tk},  $$\gpg'(T)=t_1+1=\ell(T)-t_k+1=\ell(T)-\Delta(T)+2.$$
		
		Assume second that $t_i\geq 1$ for some $2\leq i\leq k-1$. If $t_1 = 1$, then we can argue similarly as above that $u_k$ is an optimal first move of Blocker which gives us the required conclusion. Assume next that $t_1\ge 2$. Hence $u_k$ is the vertex with maximum degree in $T$ and by our assumption we have $\deg_T(u_k) = t_k + 1 \geq \sum_{j=1}^{k-1}t_j + 1$. By Proposition~\ref{prop:gpg-Tk} we have 
		$$\gpg'(T) = \min\limits_{i\in [k]}\max\{\sum\limits_{j=1}^{i-1}t_j,\sum\limits_{j=i+1}^{k}t_j\}+1.$$
		Because $t_k\geq \sum_{j=1}^{k-1}t_j$ we infer that $$\min\limits_{i\in [k]}\max\{\sum\limits_{j=1}^{i-1}t_j,\sum\limits_{j=i+1}^{k}t_j\} = \sum_{j=1}^{k-1}t_j\,.$$
		Consequently, 
		\begin{align*}
			\gpg'(T) & = \sum_{j=1}^{k-1}t_j+1 \\
			& = (\ell(T) - t_k) + 1= (\ell(T) - (\Delta(T) - 1)) + 1  \\
			& = \ell(T)-\Delta(T)+2\,,
		\end{align*}
		and we are done.
	\end{proof}
	
	Now let ${\cal T}$ be the family of trees that contains all the stars $K_{1,n}$, $n\ge 2$, and all the trees that can be obtained from some tree $T\in {\cal T}'$ by subdividing each of the edges of $T$ an arbitrary number of times. An example can be seen in Fig.~\ref{fig:graph-Tk} where the tree from ${\cal T}$ is obtained from $T_{k,\ell}\in {\cal T}'$ by subdividing some of its edges.   
	
	\begin{figure}[ht!]
		\begin{center}
			\begin{tikzpicture}[scale=1.2,style=thick]
				\tikzstyle{every node}=[draw=none,fill=none]
				\def\vr{3pt} 
				\begin{scope}[yshift = 0cm, xshift = 0cm]
					\path (-1,0) coordinate (x1);
					\path (0,0) coordinate (x2);
					\path (2.5,0) coordinate (x3);
					\path (3.5,0) coordinate (x4);
					\path (-1,1) coordinate (x5);
					\path (-1,-1.5) coordinate (x6);
					\path (3.5,1) coordinate (x7);
					\path (3.5,-1.5) coordinate (x8);
					\path (-0.67,-1) coordinate (y2);
					\path (-0.35,-0.5) coordinate (y3);
					\path (-0.5,0.5) coordinate (y4);
					\path (3.00,0) coordinate (y5);
					\path (0.65,0) coordinate (y1);
					\path (1.3,0) coordinate (y6);
					\path (1.95,0) coordinate (y7);
					\draw (x1) -- (x2) -- (x3) -- (x4);
					\draw (x2) -- (x5);
					\draw (x2) -- (x6);
					\draw (x3) -- (x8);
					\draw (x3) -- (x7);
					
					\draw (x1)  [fill=white] circle (\vr);
					\draw (x2)  [fill=white] circle (\vr);
					\draw (x3)  [fill=white] circle (\vr);
					\draw (x4)  [fill=white] circle (\vr);
					\draw (x5)  [fill=white] circle (\vr);
					\draw (x6)  [fill=white] circle (\vr);
					\draw (x7)  [fill=white] circle (\vr);
					\draw (x8)  [fill=white] circle (\vr);
					\draw (y1)  [fill=white] circle (\vr);
					\draw (y2)  [fill=white] circle (\vr);
					\draw (y3)  [fill=white] circle (\vr);
					\draw (y4)  [fill=white] circle (\vr);
					\draw (y5)  [fill=white] circle (\vr);
					\draw (y6)  [fill=white] circle (\vr);
					\draw (y7)  [fill=white] circle (\vr);
					\node at (-1,-0.6) {$\vdots$};
					\node at (3.5,-0.6) {$\vdots$};
					\node [below=0.5mm] at (0.2,0) {$u_1$};
					\node [below=0.5mm] at (2.4,0) {$u_2$};
					\draw [decorate,decoration={brace,amplitude=10pt,raise=4pt}] (-1.3,-1.8)-- (-1.3,1.3)node[left=15pt,midway]{$k$};
					\draw [decorate,decoration={brace,amplitude=10pt,raise=4pt, mirror}] (3.7,-1.8)-- (3.7,1.3)node[right=15pt,midway]{$\ell$};
				\end{scope}
			\end{tikzpicture}'
		\end{center}
		\caption{A tree from the family ${\cal T}$}\label{fig:graph-Tk}
	\end{figure}
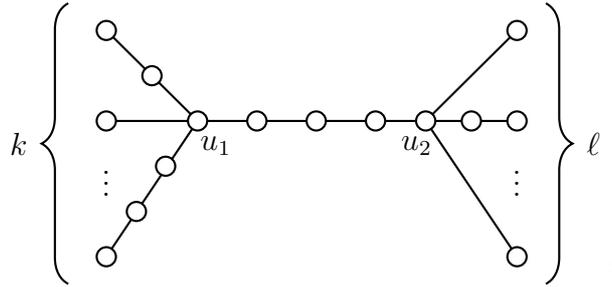

	We can now characterise the equality case in Theorem~\ref{thm:trees-upper bound}.

	\begin{theorem}
		\label{thm:trees-characterization}
		Let $T$ be a tree of order at least $3$. Then $\gpg'(T)= \ell(T) - \Delta(T) + 2$ if and only if $T\in {\cal T}$. 
	\end{theorem}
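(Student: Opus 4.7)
The plan is to prove both implications. For the sufficiency, if $T = K_{1,n}$ with $n \ge 2$, then $\ell(T)-\Delta(T)+2=2$, and since the pair formed by the centre and any other vertex induces a universal line, Lemma~\ref{lem:universal-line} yields $\gpg'(K_{1,n}) = 2$. If $T$ is obtained by subdividing edges of some $T'=T_{t_1,\ldots,t_k}\in {\cal T}'$, then $\ell(T)=\ell(T')$ and $\Delta(T)=\Delta(T')$, so it suffices to show $\gpg'(T)=\gpg'(T')$. I would do this by transferring the argument of Proposition~\ref{prop:equality-for-T'} to the subdivided setting: Blocker plays the image of $u_k$, Builder replies with a leaf at the opposite end of the spine, and the case analysis of Proposition~\ref{prop:gpg-Tk} forces the game to play exactly the $\sum_{j<k}t_j+1=\ell(T)-\Delta(T)+2$ prescribed vertices. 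Each subdivision vertex lies interior to a shortest path between two played (or forced-to-be-played) vertices and is therefore inadmissible for the rest of the game by Lemma~\ref{lem:useful}.

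For the necessity, suppose $\gpg'(T)=\ell(T)-\Delta(T)+2$ and $T$ is not a star. Let $v^*$ be a vertex of maximum degree and apply Blocker's strategy from Theorem~\ref{thm:trees-upper bound} of starting at $v^*$. After this move the game is confined to some component $T_i$ of $T-v^*$ together with $v^*$, and the upper-bound chain $\gpg'(T)\le \max_i\ell(T_i')\le\ell(T)-\Delta(T)+2$ must be tight. After relabelling, this produces two structural consequences: $\ell(T_1)=\ell(T)-\Delta(T)+1$, which forces every other component $T_j$ of $T-v^*$ to be a pendant leaf-path of $v^*$ (so $v^*$ has $\Delta(T)-1$ leaf-paths attached), and in the subgame on $T_1'=T[V(T_1)\cup\{v^*\}]$ with $v^*$ pre-selected and Builder to move, Builder must be able to force the selection of every leaf of $T_1'$.

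The core of the argument is a structural claim: the latter property forces all branch vertices of $T_1$ to lie on a single path starting at the neighbour $w$ of $v^*$, with the leaf-path counts $(t_1,t_2,\ldots,t_k)$ along that path (where $t_1$ is the count at $v^*$) satisfying $t_k\ge\sum_{j<k}t_j$. I would prove this by contradiction in two stages. If some branch vertex $b$ in $T_1$ has two sub-branches (neither containing $w$) each carrying at least one leaf, then regardless of which leaf Builder selects first, Blocker can respond with a leaf in the other sub-branch; this places $b$ on the shortest path between the three played vertices and kills enough further leaves that the game ends strictly before $\ell(T_1')$ vertices are reached. Alternatively, if the branch vertices are collinear but the dominance inequality $t_k\ge\sum_{j<k}t_j$ fails, then Blocker's better first move is not $v^*$ but rather the interior spine vertex that minimises the expression of Proposition~\ref{prop:gpg-Tk}; the same computation as there yields a game value strictly below $\ell(T)-\Delta(T)+2$, contradicting the equality assumption. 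The main obstacle is the first stage: quantifying Blocker's counter-strategy uniformly over all Builder first moves and confirming the resulting deficit. Once completed, the structural description places $T$ in ${\cal T}$, finishing the proof.
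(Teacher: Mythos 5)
Your sufficiency argument and the overall skeleton of your necessity argument (Blocker opens at a maximum-degree vertex $v^*$, play is then confined to one component of $T-v^*$ together with $v^*$, tightness forces every other component to be a pendant path, and the dominance inequality $t_k\ge\sum_{j<k}t_j$ is extracted via Proposition~\ref{prop:gpg-Tk}) agree with the paper, and your ``stage~2'' is exactly the paper's final computation. The genuine gap is in your ``stage~1'', the collinearity claim. First, the condition you propose to contradict --- some branch vertex $b$ of $T_1$ has two sub-branches avoiding $w$, each carrying a leaf --- is satisfied by \emph{every} vertex of degree at least $3$ in $T_1$, including in trees that do attain equality (e.g.\ the vertex $u_1$ in $T_{2,0,3}\in{\cal T}'$, where $\ell-\Delta+2=3$ is attained), so it cannot characterise the failure of collinearity. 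Second, and more seriously, the counter-strategy you give Blocker --- reply with a leaf in the other sub-branch of $b$ --- provably cannot work in a tree: the set of all leaves of $T_1'$ (with $v^*$ among them) is in general position, and a leaf $z$ can only become unplayable if some already-played vertex lies strictly \emph{between} $z$ and another played vertex on the tree path, which a played leaf never does. Hence as long as both players play only leaves, every remaining leaf stays playable and, by Lemma~\ref{lem:useful}, the game runs to all $\ell(T_1')$ of them; Blocker gains nothing by answering with leaves, so no deficit is created.

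What is needed instead --- and what the paper does --- is to have Blocker's second move be a \emph{branch vertex}: if $u_i$, $u_j$ and $v^*$ do not lie on a common path, then at least one of $u_i,u_j$ is still playable after Builder's first move, and once Blocker plays it, the at least two leaves of $T$ lying in components of $T-u_i$ not containing $v^*$ all become unplayable, pushing the total down to $\ell(T)-\Delta(T)+1$ and giving the contradiction. (A prior claim of the same flavour, with Blocker opening at $v^*=u_k$, is used to show that all other branch vertices sit in a single component of $T-v^*$.) You should also dispose explicitly of trees with at most two branch vertices (paths, spiders and double spiders all lie in ${\cal T}$), which the paper handles before assuming there are at least three.
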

	
	\begin{proof}
		Suppose first that $T\in {\cal T}$. If $T\cong K_{1,n}$, then $\gpg'(T) = 2 = \ell(T) - n + 2$. Otherwise $T$ is obtained from some tree $T'\in {\cal T}'$ by subdividing some of its edges. By Proposition~\ref{prop:equality-for-T'}, $\gpg'(T')=\ell(T')-\Delta(T')+2$. Moreover,  subdividing edges of $T'$ does not change the arguments of the proof of Proposition~\ref{prop:equality-for-T'} applied to $T$, that is, we also have $\gpg'(T)=\ell(T)-\Delta(T)+2$.
		
		Conversely, assume that $\gpg'(T)= \ell(T) - \Delta(T) + 2$ and consider the B'-game played on $T$. Suppose that $T$ contains $k$ vertices of degree at least $3$. If $k=0$, then $T$ is a path of order at least $3$ and its belongs to ${\cal T}$. If $k = 1$, then $T$ is a starlike tree and it also belongs to ${\cal T}$. The same conclusion holds if $k = 2$. Hence suppose in the rest that $k\ge 3$ and let $u_1,\ldots,u_k$ be the vertices of degree at least $3$ in $T$.  
		
		Set $\Delta = \Delta(T)$ and assume without loss of generality that  $\deg_T(u_k)=\Delta$. We first claim that $u_k$ does not lie on the $u_i,u_j$-path in $T$, where $i\neq k$ and $j\neq k$. Suppose this is not the case. Let Blocker start the game by the vertex $u_k$. For any $\ell\in[\Delta]$, let $T'_{\ell}$ be the components of $T-u_k$, and let $T_\ell$ be the subtree of $T$ induced by $V(T'_\ell)\cup \{u_k\}$. Assume without loss of generality that Builder's first move is in $T_1$.  Then all the moves in the rest of the game lie in $T_1$. Since $u_k$ lies on the $u_i,u_j$-path, at least one of $u_i$ and $u_j$ does not belong to $T_1$, say $u_i\in T_2$. As $\deg_T(u_i)\ge 3$, we see that $T_2$ has at least two leaves of $T$. It follows that 
		$$\ell(T_1)\leq \ell(T)- [(\Delta-2) + 1] = \ell(T)-\Delta(T) + 1\,.$$
		Using~\eqref{eq:for-B'-game} we get
		$$\gpg'(T) \le \gpg'(T_1) \le \ell(T_1) \leq \ell(T)-\Delta(T)+1\,.$$ 
		This contradiction proves the claim. 
		
		By the just proved claim we may assume without loss of generality that the vertices $u_1,\ldots, u_{k-1}$ lie in $T_1$. We next claim that for any two indices $i,j\ne k$, the vertices $u_i$, $u_j$, and $u_k$ lie on a common path in $T_1$ (where, clearly, $u_k$ is one of its endvertices). Suppose on the contrary that this is not the case. Then as the second move, Blocker can select one of $u_i$ and $u_j$, say $u_i$. Since $\deg_T(u_i) \ge 3$, there are at least two leaves of $T_1$ different from $u_k$ which cannot be played in the rest of the game. It follows that the game lasts at most $\ell(T)-(\Delta-1)-2+2=\ell(T)-\Delta + 1$ moves, a contradiction to our assumption. 
		
		We have thus proved that for any two indices $i,j\ne k$, the vertices $u_i$, $u_j$, and $u_k$ lie on a common path in $T_1$. This in turn implies that all the vertices $u_1, \ldots, u_k$ lie on a common path $P$ in $T$, where $u_k$ is one of its endvertices and we may also assume that $u_1$ is the other endvertex of $P$. Hence, $T$ contains the path $P$ and some pendant paths attached to the vertices $u_1, \ldots, u_k$, where at least one pendant path is attached to $u_1$ and at least one to $u_k$. Let $t_i$ be the number of pendant paths attached to $u_i$, $i\in [k]$. Hence $T$ is obtained from $T_{t_1,\ldots, t_k}$ by subdividing some of its edges. To complete our proof that $T\in {\cal T}$, we thus need to show that $t_k\geq \sum_{i=1}^{k-1}t_i$. 
		
		Suppose on the contrary that $t_k\leq \sum_{i=1}^{k-1}t_i-1$. Since $T$ is obtained from $T_{t_1,\ldots, t_k}$ by subdividing some of its edges, we can argue as above that Proposition~\ref{prop:gpg-Tk} applies to $T$, that is,  
		$$\gpg'(T) = \min\limits_{j\in [k]}\max\{\sum\limits_{i=1}^{j-1}t_i,\sum\limits_{i=j+1}^{k}t_i\}+1\,.$$
		Considering $j=k-1$, we get $\gpg'(T)\leq \max\{\sum\limits_{i=1}^{k-2}t_i,t_k\}+1$. 
		As we have assumed that $t_k\leq \sum_{i=1}^{k-1}t_i-1$, it follows that 
		\begin{equation}
			\label{eq:first}    
			t_k+1\leq \sum_{i=1}^{k-1}t_i=\ell(T)-t_k=\ell(T)-(\Delta-1)=\ell(T)-\Delta+1\,.
		\end{equation}
		Since $\deg_T(u_{k-1})\geq 3$, then 
		\begin{align}
			\sum_{i=1}^{k-2}t_i+1 & = \ell(T)-t_k-t_{k-1}+1 
			= \ell(T)-(\Delta-1)-t_{k-1}+1 \nonumber \\
			& \leq \ell(T)-(\Delta-1)-1+1 =\ell(T)-\Delta+1\,. \label{eq:second}
		\end{align}
		By~\eqref{eq:first} and~\eqref{eq:second} we get $\max\{\sum_{i=1}^{k-2}t_i,t_k\}+1\leq \ell(T)-\Delta + 1$ which in turn implies that $\gpg'(T)\leq \ell(T)-\Delta+1$. This contradiction to our assumption implies that  $t_k\geq \sum_{i=1}^{k-1}t_i$ and we conclude that $T\in {\cal T}$.
	\end{proof}
	
	\section*{Concluding remarks}
	
	In this paper we have introduced the Builder-Blocker general position games and derived several of their properties. We conclude with some open problems that are suggested by this research.
	
	Firstly, we suggest studying the complexity of the decision version of Builder-Blocker general position games. On the basis of the hardness of the avoidment game~\cite{ullas-2023+} we make the following conjecture.
	
	\begin{conjecture}
		The decision version of the B- and B'-games are PSPACE-complete.
	\end{conjecture}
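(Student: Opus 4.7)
Membership in PSPACE is the easy half and works uniformly for both games. Any play on $G$ consists of at most $n(G)$ alternating moves, and checking whether a candidate vertex $v$ keeps the currently marked set $S$ in general position can be done in polynomial time: precompute all pairwise distances in $G$ by BFS, then for every pair $s,s' \in S$ verify that $v$ is not a third point on a geodesic between them. Asking whether the B-game (respectively, the B'-game) has value at least $k$ is then decided by a standard alternating polynomial-time recursion over a game tree of polynomial depth with polynomial branching, placing both decision problems in PSPACE.

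For hardness the natural starting point is a reduction from the decision version of the avoidment general position game of~\cite{ullas-2023+}, whose conjectured PSPACE-hardness motivates the present conjecture. The move set and legality condition in that game are identical to ours; only the winning condition differs (last-to-move versus size-of-set). The plan is to attach to a given instance $(G,k)$ of the avoidment game a small \emph{parity-to-size converter} gadget: a pendant structure based at one fixed vertex that is forced to be entirely swept out, contributes a predictable additive constant $c$ to the size of every terminating play, and crucially shifts the parity of the total number of moves in a controllable way. Then ``Builder has a winning last move in the avoidment game'' translates to ``the final set has size with prescribed parity $\bmod\,2$ relative to $c$'', which can be phrased as $\gpg(G') \ge k+c$ or $\gpg'(G') \ge k+c$ depending on which player we want to start. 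A separate pendant of length one, when appended, swaps between the B-game and the B'-game, so once either version is shown PSPACE-hard the other follows.

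If a direct reduction from the avoidment game proves unwieldy, the fallback is a reduction from a classical PSPACE-complete game such as POS-CNF or generalised geography. Here one would use a hub vertex $u$ joined to pairwise-disjoint variable gadgets $V_x = \{v_x, \overline{v_x}\}$, clause gadgets, and large cliques to rigidify the metric; the first two moves of the game would fix $u$ and a variable literal, and the resulting forced sequence of legal moves in each subsequent variable gadget would encode a Boolean valuation, while clause gadgets would be designed so that an extra ``bonus'' vertex survives to be played exactly when the clause is satisfied. Builder's goal of maximising the final set then mirrors the existential player's goal of satisfying the formula.

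The central obstacle, in either route, is the \emph{non-local} character of the general-position constraint: whether a vertex $v$ is blocked depends on the metric of all of $G$, not just on $v$'s neighbourhood. Gadgets must therefore be designed so that geodesics between vertices of one gadget do not thread through another, and so that a choice made inside a clause or variable gadget does not inadvertently enable or forbid a move elsewhere. The standard remedy is to embed all gadgets as isometric subgraphs hanging off a common hub or a dominating clique so that intergadget distances are small, fixed, and controlled. Verifying that the intended game-theoretic semantics survives this global metric engineering -- in particular, ruling out ``sneaky'' plays that exploit long geodesics across gadgets to collapse the construction -- is where I expect the bulk of the technical work, and the reason this is stated as a conjecture rather than a theorem.
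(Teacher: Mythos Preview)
The statement you are addressing is a \emph{conjecture}, not a theorem: the paper offers no proof and explicitly leaves PSPACE-completeness open in the concluding remarks. There is therefore nothing in the paper to compare your attempt against, and your write-up is itself a plan rather than a proof --- as you acknowledge in your final paragraph.

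Your PSPACE membership argument is correct and standard. The hardness sketch, however, has a genuine gap beyond the technical ``sneaky geodesic'' issues you flag. The proposed reduction from the avoidment game of~\cite{ullas-2023+} hinges on a ``parity-to-size converter'', but the two games have orthogonal objectives: in the avoidment game a player cares only about who makes the last move, whereas in the Builder--Blocker game the players are optimising the \emph{size} of the final set. Optimal play in one game need not be optimal in the other, so a gadget that merely adds a fixed constant $c$ to every terminating play does not turn ``A moves last'' into ``final size $\ge k+c$''. For that translation to go through you would need the construction to force every maximal general position set reachable under optimal Builder--Blocker play to have one of exactly two sizes differing by one, so that the threshold question collapses to a parity question --- and nothing in your gadget description achieves or even targets that rigidity. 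The fallback route via POS-CNF or generalised geography is more promising in principle, but as written it is only a wish list of components, not a reduction; in particular you give no mechanism by which a clause gadget ``survives'' if and only if the clause is satisfied, and that is precisely the heart of any such reduction.
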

	
	We know from Inequalities~\eqref{eq:for-B-game} and~\eqref{eq:for-B'-game} that the game general position number of a graph $G$ lies between $\gp ^-(G)$ and $\gp (G)$. Is it possible to characterise the case in which equality is achieved in any of these bounds? Or is this a hard problem? 
	\begin{problem}
		Characterise graphs for which equality holds in the upper or lower bounds in Inequalities~\eqref{eq:for-B-game} or~\eqref{eq:for-B'-game}.
	\end{problem}

	In Lemma~\ref{lem:universal-line} and in Theorem~\ref{thm:gpg'=2} we have characterised the family of graphs with $\gpg(G) = 2$ and the family of graphs with $\gpg'(G) = 2$. The following problem represents the next step in this direction.
	
	\begin{problem}
		Characterise graphs with $\gpg(G) = 3$ or $\gpg'(G) = 3$, or the family of graphs with order $n$ and $\gpg (G) = n-j$ or $\gpg' (G) = n-j$ for small $j$.
	\end{problem}
	
	In Theorem~\ref{thm:Kneser} we determined $\gpg$ and $\gpg'$ for the Kneser graphs $K(n,2)$. This could be developed further as follows.
	
	\begin{problem}
		Determine the invariants also for the Kneser graphs $K(n,k)$, where $k > 2$, and Johnson graphs $J(n,k)$.    
	\end{problem}
	
	In Problem~\ref{prob:realisable pairs} we asked whether all pairs $(a,b)$ with $a,b \geq 2$ are realisable. For those pairs $(a,b)$ that we have already proven to be realisable, it would also be of interest to find the \emph{smallest} graphs with these parameters. We also suggest investigating game versions of other position type problems, such as the monophonic position problem~\cite{thochatuistef}.
	
	\section*{Acknowledgements}
	
	Sandi Klav\v{z}ar acknowledges the financial support from the Slovenian Research Agency (research core funding P1-0297 and projects N1-0218 and N1-0285). The research of James Tuite was partially funded by LMS Research in Pairs grant number 42235; he also thanks the University of Ljubljana for their hospitality. The authors are also grateful to Grahame Erskine for assistance with computational verification of these results, Aoise Evans for discussion of the manuscript, and the three anonymous reviewers for their helpful comments.
	

\end{document}